\newtheorem{theorem}{Theorem}[section]
\newtheorem{lemma}{Lemma}
\newtheorem{prop}{Proposition}
\newtheorem{remark}{Remark}
\newtheorem{corollary}{Corollary}
\newtheorem{definition}{Definition}
\newtheorem{example}{Example}
\newenvironment{potwr}[1]{{\noindent\it {Proof of Theorem \ref{#1}}}.}{\hfill$\square$}
\newcommand{\trans}{\pitchfork}
\newcommand{\R}{\mathbb R}
\newcommand{\Z}{\mathbb Z}
\title{On existence of periodic solutions for Kepler type problems}
\begin{document}
\author{P. Amster and J. Haddad}
\date{}
\maketitle

\begin{abstract}
We prove existence and multiplicity of periodic motions for the forced 2-body problem under conditions of topological character.
In the different cases, the lower bounds obtained for the number of solutions are related to the winding number of a curve in 
the plane, the homology of a space in $\R^3$, the knot type of a curve and the link type of a set of curves.
Also, the results are applied to the restricted $n$-body problem.

{\bf MSC 2010}: 34B15, 34C25, 70K40.

\end{abstract}

\label{ortega1}

\section{Introduction}
Let us consider the 
periodic singular problem

\begin{equation}
\label{equa}
\left\{
\begin{array}{c}
u''(t) \pm \frac {u(t)}{|u(t)|^{q+1}} = \lambda h(t)\\
u(0) = u(1)\\
u'(0) = u'(1)
\end{array}
\right.
\end{equation}
for a vector function $u:I=[0,1]\to \R^n$, where $q \geq 2$ and 
$h\in C(I,\R^n)$ with 
$\overline{h}:=\int_0^1h(t)\, dt = 0$.

Here $u$ describes the motion of a particle under a singular central force that can be attractive or repulsive depending on the sign $\pm$, and an arbitrary perturbation $h$.

In \cite{O1} we studied the case $n=2$, for which we proved 
the existence of periodic solutions 
under a non-degeneracy condition. In more precise terms, we obtained 
a lower bound of the number of solutions that depends purely on a topological property of the second primitive of $h$: 

\begin{theorem}
\label{oldthm}
For $n=2$, let $H$ be a periodic function such that $H'' = -h$ 
and let $r$ be the number of bounded connected components of $\R^2 \setminus Im(H)$.
Then for $\lambda$ large enough, 
problem \eqref{equa} has at least $r$ solutions.
\end{theorem}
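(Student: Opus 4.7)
Since $\lambda h$ is large, one expects the periodic solutions to be large as well and the singular term to act as a small perturbation. The natural rescaling $u = \lambda v$ transforms \eqref{equa} into
\begin{equation*}
v'' \pm \lambda^{-q}\frac{v}{|v|^{q+1}} = h,
\end{equation*}
whose formal $\lambda\to\infty$ limit is $v''=h$. The periodic solutions of the limit are $v(t)=c-H_0(t)$, $c\in\R^2$, where $H_0:=H-\overline H$ still satisfies $H_0''=-h$ but now has zero mean. The parameter $c$ is free, subject only to $v(t)\neq 0$ for all $t$, i.e.\ $c\notin \operatorname{Im}(H_0)$. Since $\operatorname{Im}(H_0)$ is a translate of $\operatorname{Im}(H)$, the complement has the same $r$ bounded components, and one expects each of them to produce a periodic solution of \eqref{equa}.

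The plan is a Lyapunov--Schmidt reduction. Decompose $v = c+\tilde v$ with $\overline{\tilde v}=0$. The projection of the rescaled equation onto zero-mean functions,
\begin{equation*}
\tilde v'' \pm \lambda^{-q}\,P\!\left(\frac{c+\tilde v}{|c+\tilde v|^{q+1}}\right)=h,
\end{equation*}
can be solved uniquely by a contraction / implicit function argument for $c$ in any compact subset of $\R^2\setminus\operatorname{Im}(H_0)$, producing $\tilde v_\lambda(c)\to -H_0$ in $C^1$ as $\lambda\to\infty$. Integrating the rescaled equation over $[0,1]$ and using $\overline h=0$ leaves the bifurcation equation
\begin{equation*}
\Phi_\lambda(c) := \int_0^1 \frac{v_\lambda(c)(t)}{|v_\lambda(c)(t)|^{q+1}}\,dt = 0,
\end{equation*}
whose zeros are in bijective correspondence with the periodic solutions $u=\lambda v$ of \eqref{equa}.

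As $\lambda\to\infty$, $\Phi_\lambda$ converges, uniformly on compact subsets of each bounded component $K$ of $\R^2\setminus\operatorname{Im}(H_0)$, to $\Phi_\infty = -\nabla U$, where
\begin{equation*}
U(c) := \frac{1}{q-1}\int_0^1 \frac{dt}{|c-H_0(t)|^{q-1}}
\end{equation*}
satisfies $U(c)\to+\infty$ as $c\to\partial K$. Hence $U$ attains its infimum inside $K$, and for a regular value $R$ slightly greater than this infimum the sublevel set $\Omega_R := \{c\in K : U(c)\le R\}$ is a compact neighborhood of the minimum on whose boundary $\nabla U$ is nonzero and points strictly outward, giving $\deg(-\nabla U,\operatorname{int}\Omega_R,0)\neq 0$. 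By stability of the Brouwer degree under the uniform convergence $\Phi_\lambda\to\Phi_\infty$, the same holds for $\Phi_\lambda$ when $\lambda$ is large, producing a zero $c_\lambda\in K$ and a corresponding periodic solution. Since the $r$ bounded components are pairwise disjoint and $c_\lambda$ determines the mean of $v_\lambda$, the resulting solutions are distinct.

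The main obstacle is the Lyapunov--Schmidt step: establishing a uniform-in-$\lambda$ contraction (or a uniform implicit function estimate) for $\tilde v_\lambda(c)$ in spite of the singularity at $v=0$. This requires controlling the linearization of $v\mapsto v/|v|^{q+1}$ on a $C^0$-tube around the formal limit $c-H_0$ and ensuring that this tube stays away from the origin, which is precisely where the assumption $c\in K$ (with $K$ bounded away from $\operatorname{Im}(H_0)$ on compact subsets) enters. Once $\tilde v_\lambda$ is constructed, the degree computation and the passage to the limit are routine.
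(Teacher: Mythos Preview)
Your proposal is correct and follows essentially the same averaging strategy that the paper sketches in its Preliminaries (and attributes to \cite{O1}): after rescaling, the problem is a small perturbation of the trivial equation whose periodic solutions are constants, and the reduced (bifurcation) map is exactly $F(x)=\int_0^1 (x-H)/|x-H|^{q+1}\,dt=-\tfrac{1}{q-1}\nabla G$, whose degree on each bounded component of $\R^2\setminus\operatorname{Im}(H)$ equals $1$ because $G$ blows up at $\partial\Omega_k$. The only packaging difference is that the paper uses the change of variables $u=\lambda(z-H)$ and Leray--Schauder degree continuation of the full functional $\mathcal G_\epsilon$ on sets $\mathcal D=\{z:\operatorname{Im}(z)\subset\Omega_k^*\}$, which bypasses the explicit Lyapunov--Schmidt / contraction step you flag as the ``main obstacle'': one only needs that $\mathcal G_\epsilon$ has no zeros on $\partial\mathcal D$ for small $\epsilon$, not a uniform IFT construction of $\tilde v_\lambda(c)$. (Minor slip: after $u=\lambda v$ the small parameter in front of the singular term is $\lambda^{-(q+1)}$, not $\lambda^{-q}$, matching the paper's $\epsilon=\lambda^{-(q+1)}$.)
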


In this work, we extend Theorem \ref{oldthm} in several directions. 

In section \ref{repulsive} we consider the repulsive case. 
We obtain at least one extra solution from the direct computation of the Leray-Schauder degree over the set of curves that are bounded away from the origin and from infinity. 
More precisely, we obtain a lower bound 
of the number of solutions that depends not only on the number of connected components 
of $\R^2\setminus Im(H)$ 
but also on the winding number of $H$ with respect to these components.

The main idea is the following:
firstly, we prove that solutions of $\eqref{equa}$ are uniformly bounded for $\lambda \leq \lambda_*$ and that there are no solutions for $\lambda = 0$.
Thus, if we picture the solution set $S = \{(u,\lambda) \in C^2([0,1],\R^2) \times \R/ u \hbox{ is a solution of } \eqref{equa} \}$ 
then $S$ contains a continuum which starts at a solution $(u_*, \lambda_*)$ given by 
Theorem \ref{oldthm} and is bounded both in the $\lambda$ and the $u$ directions.
Then $S$ must `turn around' in the $\lambda$ direction and intersect again the subspace $\lambda = \lambda_*$.

Furthermore, in section \ref{genericity} we prove that for a `generic' forcing term $h$ the repulsive problem has in fact at least $2r$ periodic solutions.
More specifically, take $\tilde{C}^0_{per}$ the Banach space of continuous periodic functions of zero average. 
Then there exists a residual set $\Sigma \in \tilde{C}^0_{per}$ such that if $\lambda h \in \Sigma$ then all solutions of \eqref{equa} are non-degenerate. 
As a consequence, all of them have multiplicity one and depend differentiably on $h$.

In section \ref{counterexamples} we give some examples illustrating existence and non existence of solutions in some particular situations. 

In section \ref{higherdimensions} we extend Theorem \ref{oldthm} to higher dimensions.
Our proofs make use of some classical results of algebraic topology.
The case $n = 3$ is treated separately because the homology of open sets with smooth boundary is simple and easy to understand, while the case $n > 3$ needs more restrictive hypotheses.

In section \ref{morseandknots} we obtain further results for the case $n=3$, assuming that $H$ is an embedded knot.
The lower bounds for the number of solutions will depend on the knot type of $H$, specifically on a knot invariant called the tunnel number $t(H)$. For example, we prove existence of at least $3$ solutions if $H$ is a nontrivial knot and at least $5$ solutions when it is a composite knot.

Finally, in section \ref{restrictednbody} we apply the methods of section \ref{morseandknots} to the restricted $N$-body problem.

\subsection{Preliminaries}

Theorem \ref{oldthm} is proved 
in \cite{O1} using a result contained in Cronin's book \cite{Cr} about the averaging method.
However, for our purposes it shall be convenient to describe the procedure in a precise way.
As in \cite{O1}, let us make the change of variables
\[u(t) = \lambda (z(t) - H(t))\]
so equation \eqref{equa} becomes

\begin{equation}
\label{equa2}
\left\{
\begin{array}{c}
z''(t) = \mp \epsilon \frac { z(t)- H(t)}{|z(t)-H(t)|^{q+1}}\\
z(0) = z(1)\\
z'(0) = z'(1)
\end{array}
\right.
\end{equation}
where 
\begin{equation}
\label{epslam}
\epsilon = \lambda^{-(q+1)}.
\end{equation}

We shall associate a functional between Banach spaces 
\[\mathcal G_\epsilon:C^2_{per}(I,\R^n) \to C^2_{per}(I,\R^n)\]
to this system,  
which is continuous (in fact, analytic) and such that solutions of 
\eqref{equa2}
are characterized as points $z \in C^2_{per}(I,\R^n)$ which are zeros of $\mathcal G_\epsilon$.
We shall use the Leray-Schauder degree
in order to prove 
that the zeros of $\mathcal G_0$ can be 
continued to zeros of $\mathcal G_\epsilon$ for small values of $\epsilon$. 
Then, it is enough to study the function $\mathcal G_0$, which can be identified with its restriction to the finite dimensional space of constant functions, namely the function $F: \R^n \setminus Im(H) \to \R^n$ given by 
\begin{equation}
\label{funcF}
F(x) = \int_0^1 \frac {x - H(t)}{|x - H(t)|^{q+1}} dt.
\end{equation}

Specifically, for each open set 
$D \subseteq \R^n \setminus Im(H)$ we may associate the open set of curves 
$\mathcal D = \{z \in C^2_{per}(I,\R^n) / Im(z) \subseteq D\}$ and hence
$deg(\mathcal G_0,\mathcal D, 0) = deg(F,D,0)$; thus, it suffices to
look for open sets $D$ such that the latter 
degree is different from zero. 
For each of these sets 
$D$ there exists a solution of \eqref{equa2} for $\epsilon > 0$ small.
In the situation of 
Theorem \ref{oldthm}, if $\Omega_1, \ldots, \Omega_r$ are the bounded connected components of 
$\R^2 \setminus Im(H)$, we may construct as in \cite{O1} open sets $\Omega_i^* \subset \overline{\Omega_i^*} \subset \Omega_i$ where all the respective degrees are equal to $1$.

Moreover, there is also a functional $\mathcal F$ associated to equation \eqref{equa} 
such that
\begin{equation}
\label{GF} \mathcal G_\epsilon (x) = \frac 1 \lambda \mathcal F(\lambda (x - H)) + H
\end{equation}
so it is conjugated to $\mathcal G_\epsilon$ by affine homeomorphisms.
The functional $\mathcal F$ is independent of $\lambda$. It 
shall be constructed explicitly in section \ref{repulsive} and then we shall define $\mathcal G_\epsilon$ by formula \eqref{GF}.

The relation between these two functionals is the following:
the function $u$ is a solution of \eqref{equa} 
if and only if $\mathcal F (u) = - \lambda H$, if and only if $\mathcal G_\epsilon (\frac u \lambda + H ) = \mathcal G_\epsilon (z) = 0$, if and only if $z = \frac u \lambda + H$ is a solution of \eqref{equa2}.
Also, the degrees are related by
\[deg(\mathcal F, \mathcal E, -\lambda H) = deg(\mathcal G_\epsilon, \mathcal D, 0)\]
where $\epsilon$ and $\lambda$ are related by (\ref{epslam}) and $\mathcal D$ and $\mathcal E$ are related by
\begin{equation}
\label{ED}
\mathcal D =  H + \frac 1 \lambda \mathcal E.
\end{equation}

For convenience, let us define the function 
\[g:\R^n \setminus \{0\} \to \R,\; g(u) = \frac 1 {|u|^{q-1}},\]
so $\nabla g(u) = - (q-1) \frac u { |u|^{q+1}}$. 
From now on, $H:S^1 \to \R^n$ shall be a periodic second primitive of $-h$, which is unique up to translations.

\section{The repulsive case}
\label{repulsive}
In this section we shall improve Theorem \ref{oldthm} for the repulsive case. 
In first place, let us prove the existence of an extra solution by a direct degree argument:
\begin{theorem}
\label{mainthmrep}
In the conditions of Theorem \ref{oldthm}, the repulsive case of problem \eqref{equa} admits at least $r+1$ solutions.
\end{theorem}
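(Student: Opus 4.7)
The strategy, as sketched in the introduction, is to combine a uniform a priori bound on $u$ with a non-existence result at $\lambda = 0$, and then apply the Leray-Schauder degree. The total degree over an open set $\mathcal U$ containing all solutions for $\lambda \in [0, \lambda_*]$ must vanish by homotopy down to $\lambda = 0$, whereas the $r$ known solutions of Theorem \ref{oldthm} already contribute degree $r$; consequently, at least one additional zero is forced. Phrased geometrically, the continuum of solutions emanating from any $(u_*, \lambda_*)$ cannot escape to infinity nor reach $\lambda = 0$, so it must turn around and re-hit the slice $\lambda = \lambda_*$ at a new solution.

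Step 1 (a priori bounds and non-existence). I would first show that there exist constants $0 < m < M$ and $C > 0$ such that every solution $u$ of the repulsive \eqref{equa} with $\lambda \in (0, \lambda_*]$ satisfies $m \le |u(t)| \le M$ and $\|u'\|_\infty \le C$. Multiplying the equation by $u$ and integrating by parts using periodicity yields
\[\int_0^1 |u'|^2\,dt + \int_0^1 \frac{dt}{|u|^{q-1}} = -\lambda \int_0^1 u \cdot h\,dt,\]
from which the lower bound $m$ follows (the repulsive singularity dominates near the origin), whereas the upper bound $M$ also uses the identity $\int_0^1 u/|u|^{q+1}\,dt = 0$ obtained from $\int_0^1 u'' = 0$ and $\overline h = 0$ to preclude runaway solutions. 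Specializing the identity above to $\lambda = 0$ forces $\int |u'|^2 = \int 1/|u|^{q-1} = 0$, which is impossible, so no periodic solution exists at $\lambda = 0$.

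Step 2 (degree computation and excision). Let $\mathcal U \subset C^2_{per}(I, \R^2)$ be the open subset defined by $m/2 < |u(t)| < 2M$ and $\|u'\|_\infty < 2C$. By Step 1 every solution of \eqref{equa} for $\lambda \in [0, \lambda_*]$ belongs to $\mathcal U$ and none lies on $\partial \mathcal U$, so the Leray-Schauder degree of the functional $\mathcal F$ from \eqref{GF} gives a well-defined map $\lambda \mapsto \deg(\mathcal F, \mathcal U, -\lambda H)$ on $[0, \lambda_*]$, constant by homotopy invariance and equal to $0$ at $\lambda = 0$. At $\lambda = \lambda_*$ the sets $\mathcal E_i \subset \mathcal U$ associated via \eqref{ED} to the regions $\Omega_i^* \subset \Omega_i$ described in the introduction satisfy $\deg(\mathcal F, \mathcal E_i, -\lambda_* H) = 1$ for each $i = 1, \ldots, r$. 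The additivity-excision property then yields
\[\deg\bigl(\mathcal F,\ \mathcal U \setminus \textstyle\bigcup_{i=1}^r \overline{\mathcal E_i},\ -\lambda_* H\bigr) = 0 - r = -r \neq 0,\]
which produces the $(r+1)$-th solution in $\mathcal U \setminus \bigcup_i \overline{\mathcal E_i}$.

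The main technical obstacle is the a priori upper bound on $\|u\|_\infty$ in Step 1. The lower bound on $|u|$ is a clean consequence of the repulsive energy identity, but the upper bound requires a careful quantitative use of the zero-mean condition $\overline h = 0$ to rule out drift to infinity as $\lambda$ varies in $(0, \lambda_*]$. Once the bounds are in hand, the degree argument is a standard application of homotopy invariance and excision.
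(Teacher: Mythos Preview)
Your proposal is correct and follows essentially the same strategy as the paper: a priori bounds together with nonexistence at $\lambda=0$ force the total Leray--Schauder degree to vanish, and excision of the $r$ regions coming from Theorem~\ref{oldthm} leaves degree $-r$ on the complement, producing the extra solution. One minor correction to your assessment of the difficulty: the integral identity by itself only bounds $\int_0^1 |u|^{-(q-1)}\,dt$, not $\min_t|u(t)|$; the paper obtains the pointwise lower bound via the energy $E(t)=\tfrac12|u'|^2+\tfrac{1}{(q-1)|u|^{q-1}}$ and the differential inequality $|E'|\le C\sqrt{E}$, so the lower bound is actually the more delicate step, whereas the upper bound follows quickly from the oscillation control $\|u-\overline u\|_\infty\le C$ combined with the zero-mean condition $\overline{u/|u|^{q+1}}=0$.
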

It is worth noticing, however, that Theorem \ref{mainthmrep}
shall be improved as well at the end of this section  
by 
studying the winding number of $H$ with respect to the connected 
components of $\R^2\setminus Im(H)$.

We shall make use of the following two lemmas, which shall provide 
us {\sl a priori} bounds for the solutions. 
Later on, these bounds shall be used also 
in the proofs of Theorems \ref{mainthmR3} and \ref{mainthmRn} for higher dimensions, 
so the results shall be stated in $\R^n$.
We remark that nothing of this can be 
extended to the attractive case.

\begin{lemma}
\label{boundsolutions}
Given $\lambda_* > 0$ there exist constants $R,r > 0$ such that
\[r \leq u(t) \leq R \;\;\; \forall t \in I\]
for any $u:I \to \R^n$ solution of \eqref{equa} with $\lambda \leq \lambda_*$.
\end{lemma}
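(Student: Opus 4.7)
The plan is to derive both bounds from a single scalar differential inequality for $\phi(t) := |u(t)|$, obtained by pairing the (repulsive) equation $u'' = u/|u|^{q+1} + \lambda h$ with $u$ itself. Writing $(|u|^2)'' = 2|u'|^2 + 2u\cdot u''$ and substituting the equation gives
\[
\phi\phi'' + (\phi')^2 = |u'|^2 + \phi^{1-q} + \lambda\, u\cdot h,
\]
and Cauchy--Schwarz yields $(\phi')^2 \leq |u'|^2$, whence
\[
\phi''(t) \;\geq\; \phi(t)^{-q} - \lambda_*\|h\|_\infty.
\]
Setting $C_0 := \lambda_* \|h\|_\infty$ and integrating over $[0,1]$, periodicity makes the left-hand side vanish, yielding the key $L^1$-estimate $\int_0^1 \phi^{-q}\,dt \leq C_0$.

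\noindent For the upper bound, this estimate together with the equation gives $\|u''\|_{L^1} \leq 2C_0$, and then (by the Green's function of the periodic Laplacian acting on zero-mean functions) both $\|u'\|_\infty$ and $\|\tilde u\|_\infty$ are uniformly bounded, where $\tilde u := u - \bar u$. To bound $|\bar u|$ I integrate the equation once more: using $\bar h = 0$ produces the identity $\int_0^1 u/|u|^{q+1}\,dt = 0$. Expanding the integrand around $\bar u$ shows that this integral equals $\bar u/|\bar u|^{q+1}$ (of magnitude $|\bar u|^{-q}$) plus a correction of order $\|\tilde u\|_\infty\,|\bar u|^{-q-1}$; once $|\bar u| \gg \|\tilde u\|_\infty$ the leading term strictly dominates and cannot be cancelled, forcing $|\bar u|$ to stay bounded. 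Hence $|u(t)| \leq R$.

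\noindent For the lower bound I use the mechanical energy $\tilde E(t) := \tfrac12 |u'|^2 + \frac{1}{(q-1)|u|^{q-1}}$, which satisfies $\tilde E'(t) = \lambda\, h(t)\cdot u'(t)$. The variation bound $|\tilde E(t_2) - \tilde E(t_1)| \leq C_0 \|u'\|_{L^1}$, together with the $\|u'\|_\infty$ bound already obtained, gives $\sup\tilde E - \inf\tilde E \leq C$. Using the upper bound $|u| \leq R$ and the $L^1$-estimate for $\phi^{-q}$, one has $\int_0^1 |u|^{1-q}\,dt \leq R \int_0^1 \phi^{-q}\,dt \leq RC_0$, so $\int_0^1 \tilde E\,dt$ is bounded and hence so is $\sup \tilde E$. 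Evaluating $\tilde E$ at a minimizer $t_*$ of $\phi$, one gets $\frac{1}{(q-1)\phi(t_*)^{q-1}} \leq \tilde E(t_*) \leq \sup \tilde E$, which yields $\phi(t_*) \geq r > 0$.

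\noindent The delicate step is the upper bound on $|\bar u|$: both the leading term $\bar u/|\bar u|^{q+1}$ and the correction in the expansion of $\int u/|u|^{q+1}\,dt$ tend to zero as $|\bar u|\to\infty$, so one has to compare them at the correct order to rule out cancellation. Everything else is a routine combination of the pointwise inequality $\phi'' \geq \phi^{-q} - C_0$ with standard energy and Green's-function estimates; note that this derivation fails in the attractive case precisely because the sign of $\phi^{1-q}$ in the identity for $\phi\phi''$ is then reversed, consistent with the remark that Lemma~\ref{boundsolutions} has no analogue there.
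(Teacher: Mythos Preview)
Your proof is correct, and the overall architecture differs from the paper's in an interesting way. The paper argues in the opposite order: it first multiplies the equation by $u$ and integrates to obtain the self-bootstrapping inequality $\int_0^1 E \leq C\lambda_*\sqrt{\int_0^1 E}$, which bounds $\int_0^1 E$ outright; then the differential inequality $|E'|\leq C\lambda_*\sqrt{E}$ upgrades this to a pointwise bound on $E$, giving the lower bound $r$ without any reference to an upper bound. Only afterwards does the paper use the resulting bound on $\|u'\|_\infty$ (coming from $E$) together with the half-space argument for $\int u/|u|^{q+1}=0$ to get $R$. Your route instead starts from the scalar inequality $\phi''\geq \phi^{-q}-C_0$ to obtain $\int_0^1 \phi^{-q}\leq C_0$ directly, which immediately controls $\|u''\|_{L^1}$ and hence $\|u'\|_\infty$ and $\|\tilde u\|_\infty$; you then run the same half-space argument (your ``expansion around $\bar u$'' is exactly the paper's observation that once $|\bar u|\gg\|\tilde u\|_\infty$ the curve lies in a half-space), and finally feed the upper bound $R$ back into the energy estimate via $\int\phi^{1-q}\leq R\int\phi^{-q}$ to close the lower bound.

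Both approaches are short; yours has the appeal of extracting the crucial $L^1$ control on $|u|^{-q}$ from a one-line scalar comparison, at the cost of making the lower bound depend on the upper one. The paper's bootstrap $\int E\leq C\sqrt{\int E}$ is slightly slicker in that it makes the lower bound self-contained. Note also that the step you flag as ``delicate'' is not really: once $\|\tilde u\|_\infty$ is bounded, the integrand $u/|u|^{q+1}$ has positive inner product with $\bar u$ whenever $|\bar u|>\|\tilde u\|_\infty$, so its integral cannot vanish---no order comparison is needed.
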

\begin{proof}

Let us firstly prove that solutions are bounded away from the origin.
Let $u$ be a solution. We define the energy and compute its derivative
\[E(t) = \frac 1 2 |u'(t)|^2 + \frac 1 {(q-1) |u(t)|^{q-1}}\]
\[ E'(t) = \langle u', u'' \rangle - \frac {\langle u', u\rangle}{|u|^{q+1}} = \langle u', \lambda h \rangle \]
\begin{equation}
\label{difeq1}
|E'(t)| \leq \lambda |u'| |h| \le  
\lambda \|h\|_\infty \sqrt{2E(t)}  \leq C \lambda_* \sqrt{E(t)}.\\
\end{equation}

Now multiply the equation by $u$ and integrate,
\[\int_0^1 \langle u, u''\rangle - \int_0^1 \frac 1 {|u|^{q-1}} = \lambda \int_0^1 \langle h, u \rangle\]
\[-\|u'\|_2^2 - \int_0^1 \frac 1 {|u|^{q-1}} = \lambda \int_0^1 \langle h, u \rangle = \lambda \int_0^1 \langle h, u - \overline{u} \rangle \]

\[\frac 1 2 \|u'\|_2^2 + \int_0^1 \frac {q-2}{q-1} \frac 1 {|u|^{q-1}} + \int_0^1 E = - \lambda \int_0^1 \langle h, u - \overline{u} \rangle \]
\[ \int_0^1 E \leq \lambda \left| \int_0^1 \langle h, u - \overline{u} \rangle \right| \leq \lambda \|h\|_2 \|u - \overline{u}\|_2 \]

\begin{equation}
\label{difeq2}
\int_0^1 E \leq \lambda_* C \|u'\|_2 \leq \lambda_* C \sqrt{ \int_0^1 E }.
\end{equation}
From this inequality, 
a bound for $E(t_0)$ for some $t_0$ is obtained. 
Using \eqref{difeq1}, we get a bound for $E(t)$ for all $t$, and hence a bound for $\frac 1 {|u(t)|^{q-1}}$ depending only on $\lambda_*$.

Next, let us prove that solutions are bounded.
By contradiction, suppose there exists a sequence $\{u_n\}$ such that $\|u_n\|_\infty \to \infty$. 
Then $\|u_n - \overline{u_n}\|_\infty \leq C \|u_n'\|_\infty \leq C \sqrt{\|E\|_\infty} \leq C$ which, in turn, implies that if 
$n$ is large then the image of $u_n$ lies in a half-space. This contradicts the fact that $\overline{g(u_n)} =0$.
\end{proof}

\begin{lemma}
\label{nosolutions}
Problem \eqref{equa} has no solutions for $\lambda=0$. 
\end{lemma}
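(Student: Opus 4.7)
\medskip

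\noindent\emph{Proof proposal.} The plan is to exploit strict convexity of $t\mapsto|u(t)|^2$. When $\lambda=0$, the repulsive version of \eqref{equa} is the autonomous ODE
\[
u''(t) \;=\; \frac{u(t)}{|u(t)|^{q+1}}
\]
with periodic boundary conditions. (I read Lemma \ref{nosolutions} as pertaining to the repulsive case of this section, since in the attractive case the circular orbits $u(t)=R\bigl(\cos 2\pi kt,\sin 2\pi kt\bigr)$ with $R^{q+1}=(2\pi k)^{-2}$ solve the corresponding equation for every integer $k\geq 1$, so the statement cannot hold in the attractive setting.)

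Given a solution $u$, I would introduce $\varphi(t)=\tfrac12|u(t)|^2$ and compute
\[
\varphi''(t) \;=\; |u'(t)|^2 \;+\; \bigl\langle u(t),u''(t)\bigr\rangle \;=\; |u'(t)|^2 \;+\; \frac{1}{|u(t)|^{q-1}} \;>\; 0,
\]
the last term being well defined because the ODE itself forces $u\neq 0$ on $I$ (one also has Lemma \ref{boundsolutions} on standby). Strict convexity of $\varphi$ is then incompatible with periodicity: $u$ being $1$-periodic implies $\varphi'$ is $1$-periodic too, so
\[
0 \;=\; \varphi'(1)-\varphi'(0) \;=\; \int_0^1 \varphi''(t)\,dt \;>\; 0,
\]
a contradiction.

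I do not anticipate any serious obstacle; the argument is a short convexity/integration trick that depends only on the repulsive sign together with periodicity. The main item worth flagging is that the statement is intrinsically tied to the repulsive setting of Section \ref{repulsive}: in the attractive case the circular orbits above show that periodic solutions at $\lambda=0$ genuinely exist, so a different statement (or a different argument) would be required there.
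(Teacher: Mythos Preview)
Your argument is correct and is essentially the paper's own proof: the paper multiplies the equation by $u$ and integrates by parts to obtain $-\int_0^1 |u'|^2 = \int_0^1 |u|^{-(q-1)}$, which is exactly your identity $\int_0^1 \varphi'' = 0$ rewritten. Your remark that the lemma is specific to the repulsive case is also on point.
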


\begin{proof}
Let $\lambda =0$ and suppose $u$ is a solution. 
Multiply equation \eqref{equa} by $u$ and integrate, then 
\[ -\int_0^1 |u'|^2 = \int_0^1 u'' u = \int_0^1 \frac 1 {|u|^{q-1}},\]
a contradiction.
\end{proof}

\begin{lemma}
Solutions of \eqref{equa} are also bounded in $C^2(I,\mathbb R^n)$.
\end{lemma}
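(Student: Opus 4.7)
The plan is to bootstrap from the $C^0$ bounds on $u$ (given by Lemma \ref{boundsolutions}) to bounds on $u'$ and then on $u''$, using the energy identity and the equation itself.

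First, recall from the proof of Lemma \ref{boundsolutions} that the energy
\[E(t) = \frac{1}{2}|u'(t)|^2 + \frac{1}{(q-1)|u(t)|^{q-1}}\]
was shown to be uniformly bounded along solutions with $\lambda \leq \lambda_*$: combining \eqref{difeq2} (which bounds $\int_0^1 E$) with \eqref{difeq1} (which controls the oscillation of $E$) yields a constant $M$, depending only on $\lambda_*$ and $\|h\|_\infty$, with $\|E\|_\infty \leq M$. Since $|u'(t)|^2 \leq 2 E(t)$, this immediately gives a uniform $C^0$ bound on $u'$, namely $\|u'\|_\infty \leq \sqrt{2M}$.

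Next, I would bound $u''$ directly from the differential equation. Solving \eqref{equa} pointwise,
\[u''(t) = \mp \frac{u(t)}{|u(t)|^{q+1}} + \lambda h(t).\]
By Lemma \ref{boundsolutions} there is a constant $r>0$ with $|u(t)| \geq r$ for all $t$, so
\[\left| \frac{u(t)}{|u(t)|^{q+1}} \right| = \frac{1}{|u(t)|^q} \leq \frac{1}{r^q},\]
while the forcing term satisfies $|\lambda h(t)| \leq \lambda_* \|h\|_\infty$. Adding these yields a uniform bound on $\|u''\|_\infty$.

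Combining the three bounds on $\|u\|_\infty$, $\|u'\|_\infty$ and $\|u''\|_\infty$ gives the claimed $C^2$ bound. There is really no main obstacle here: the argument is a routine two-step bootstrap, and all three bounds depend only on $\lambda_*$, $q$, and $\|h\|_\infty$. The only point worth checking carefully is that the constants produced in the proof of Lemma \ref{boundsolutions} are independent of the particular solution, which they are by construction.
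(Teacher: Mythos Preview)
Your proof is correct and follows essentially the same approach as the paper: the paper's proof simply notes that Lemma~\ref{boundsolutions} already yields uniform bounds on $\|u\|_\infty$ and $\|u'\|_\infty$ (the latter via the energy bound, exactly as you spell out), and then reads off the bound on $\|u''\|_\infty$ directly from the equation. Your write-up is just a more detailed version of the same two-step bootstrap.
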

\begin{proof}
We know from Lemma \ref{boundsolutions} that $\|u\|_\infty$ and $\|u'\|_\infty$ are bounded. 
From \eqref{equa}, it follows that $\|u''\|_\infty$ is bounded as well.
\end{proof}

\begin{lemma}
Let $\mathcal F: \mathcal E \subset C^2(I, \mathbb R^n) \to C^2(I, \mathbb R^n)$ 
where $\mathcal F$ is the functional associated to \eqref{equa} and $\mathcal E = \{u\in C^2:r < |u| < R, \|u'\|<C, \|u''\| <C\}$ where $r$, $R$ and $C$ are the 
bounds obtained in the preceding lemmas.

Then $deg(\mathcal F, \mathcal E, -\lambda_* H) = deg(\mathcal G_{\epsilon_*}, \mathcal D, 0) = 0$
where $\epsilon_*$ and $\mathcal D$ are defined as in \eqref{epslam} and \eqref{ED}.
\end{lemma}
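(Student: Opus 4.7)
The plan is to compute the degree by homotopy in the parameter $\lambda$, deforming the forcing from $-\lambda_* H$ down to $0$. As recalled in the preliminaries, the equation $\mathcal F(u) = -\lambda H$ is equivalent to $u$ being a solution of \eqref{equa} at parameter $\lambda$; hence I would set up the affine homotopy $s \mapsto -s\lambda_* H$ for $s \in [0,1]$ and appeal to homotopy invariance of the Leray--Schauder degree to obtain
\[ deg(\mathcal F, \mathcal E, -\lambda_* H) = deg(\mathcal F, \mathcal E, 0). \]

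To justify the homotopy invariance, I must check that no zero of $\mathcal F(\cdot) + s\lambda_* H$ lies on $\partial \mathcal E$ for any $s \in [0,1]$. But such a zero is precisely a solution of \eqref{equa} at parameter $\lambda = s\lambda_* \in [0, \lambda_*]$, so the uniform bounds $r \leq |u| \leq R$ and $\|u'\|_\infty, \|u''\|_\infty \leq C$ from Lemma \ref{boundsolutions} and the preceding $C^2$ estimate apply to every member of the family. Since these bounds hold with room to spare (and in any case the constants defining $\mathcal E$ may be taken with a bit of slack, say a slightly smaller lower bound and slightly larger upper bounds), every zero of the homotopy lies strictly in the interior of $\mathcal E$, and the invariance is applicable.

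It then remains to evaluate $deg(\mathcal F, \mathcal E, 0)$. By Lemma \ref{nosolutions}, the equation \eqref{equa} with $\lambda = 0$ has no solution, which means $\mathcal F(u) = 0$ admits no zero whatsoever; in particular $\mathcal F^{-1}(0) \cap \overline{\mathcal E} = \emptyset$ and the degree is $0$. Chaining with the previous equality yields $deg(\mathcal F, \mathcal E, -\lambda_* H) = 0$, while the identification with $deg(\mathcal G_{\epsilon_*}, \mathcal D, 0)$ is one of the standing formulas from the preliminaries.

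The main obstacle — in fact the only nontrivial point — is verifying that the zeros of the homotopy remain strictly inside $\mathcal E$ for every $s \in [0,1]$. This is exactly what the uniformity in $\lambda \in [0, \lambda_*]$ of the a priori bounds delivers; absent that uniformity, the homotopy could either approach the singularity at the origin or run off to infinity and the argument would collapse. A minor background point to handle in passing is that $\mathcal F$ must be a compact perturbation of the identity (or admit an appropriate such decomposition) so that the Leray--Schauder degree is defined, but this is implicit in the construction of $\mathcal F$ via the compact resolvent of the periodic boundary value problem.
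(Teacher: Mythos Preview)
Your argument is correct and is precisely the continuation-theorem computation the paper invokes: homotope the target from $-\lambda_* H$ to $0$ using the uniform a priori bounds of Lemma~\ref{boundsolutions} to keep zeros off $\partial\mathcal E$, then conclude the degree vanishes because Lemma~\ref{nosolutions} rules out solutions at $\lambda=0$. The equality with $deg(\mathcal G_{\epsilon_*},\mathcal D,0)$ is indeed just the affine conjugation formula from the preliminaries.
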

\begin{proof}
It follows immediately from the continuation theorem.
\end{proof}

Using this fact, we are able to obtain an extra solution of \eqref{equa}.
\medskip

\begin{potwr}{mainthmrep}
Following the notation and the proof of Theorem 1.1 in \cite{O1}, set 
\[\mathcal A_k := \{z \in C^2(I, \mathbb R^n): Im(z) \subseteq \Omega_k^*, \|z'\|_\infty<C, \|z''\|_\infty <C\}\]
with $\Omega_k^* \subseteq \Omega_k$ such that $deg(\nabla g, \Omega_k^*, 0) = 1$, and take $\mathcal E$ as in the previous lemma.

We know that, for some 
$\epsilon_*$ small enough, 
$deg(\mathcal G_{\epsilon_*}, \mathcal A_k, 0) = 1$ and the problem has a solution in $\mathcal A_k$.

By formula \eqref{ED}, $\mathcal D = \{z/ \frac r {\lambda_*} < |z - H| < \frac R {\lambda_*}\}$, so taking $R$ large enough it is seen that 
$\mathcal A_k \subseteq \mathcal D$ for all $k$.
By the previous lemma, $deg(\mathcal G_{\epsilon_*}, \mathcal D, 0) = 0$.
Defining $\mathcal B = \mathcal D \setminus \overline{\bigcup_{k = 1}^r \mathcal A_k} $ we obtain $deg(\mathcal G_{\epsilon_*}, \mathcal B, 0) = -r$, so there exists at least one more solution in $\mathcal B$,
which is obviously different from the others.
\end{potwr}

\smallskip

In the preceding proof, when $r>1$ it is worthy to observe that, 
although the degree of 
$\mathcal G_{\epsilon_*}$ 
is equal to $-r$ we cannot assert 
the existence of $r$ different extra solutions since 
we are not able to ensure that they are non-degenerate. 
But we are still able to distinguish solutions 
using properties that are invariant under continuation. 

\begin{definition}
Let $\gamma : I \to \R^2$ be a continuous curve and let $x \in \R^2\setminus Im(\gamma)$. 
Let $j(x,\gamma) \in \Z$ be defined as the winding number of $\gamma$ around $x$.
The function $x \mapsto j(x,\gamma)$ is constant in each connected component of $\R^2 \setminus Im(\gamma)$.
Thus, if $\Omega \subseteq \R^2$ is one of these components, we define the winding number $j(\Omega, \gamma)$ of $\gamma$ around $\Omega$.
\end{definition}

\begin{theorem}
\label{mainthmrepimproved}
In the conditions of Theorem \ref{mainthmrep}, 
let $\Omega_1, \ldots, \Omega_r$ be the connected components of $\R^2 \setminus Im(H)$ and let $s$ be the cardinality of the set $\{j(\Omega_k,H)/k = 1, \ldots, r\}$. Then the repulsive case of 
\eqref{equa} admits at least $r+s$ solutions.
\end{theorem}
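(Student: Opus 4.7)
The plan is to refine the degree count from Theorem \ref{mainthmrep} by decomposing $\mathcal E$ according to the winding number around the origin, which is locally constant on curves that avoid $0$. For each integer $w$, set
\[\mathcal E^w := \{u \in \mathcal E : j(0, u) = w\};\]
each $\mathcal E^w$ is clopen in $\mathcal E$, only finitely many are nonempty (by the $C^1$ bound of Lemma \ref{boundsolutions}), and they partition $\mathcal E$.

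First I would show that $deg(\mathcal F, \mathcal E^w, -\lambda_* H) = 0$ for every $w$, by the same continuation argument used just before Theorem \ref{mainthmrep}: the family $\lambda \mapsto \mathcal F(\cdot) + \lambda H$, for $\lambda \in [0, \lambda_*]$, has no zeros on $\partial \mathcal E$ by the a priori bounds, hence none on $\partial \mathcal E^w \subseteq \partial \mathcal E$ either, and at $\lambda = 0$ Lemma \ref{nosolutions} excludes zeros altogether.

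Next I would place each $\mathcal A_k$ in its winding class. Shrink $\Omega_k^*$ to be convex (keeping $\overline{\Omega_k^*} \subset \Omega_k$ and $deg(\nabla g, \Omega_k^*, 0) = 1$); then for any $z \in \mathcal A_k$, the straight-line homotopy $s \mapsto (1-s)z + sc_k$ (with $c_k \in \Omega_k^*$ fixed) stays in $\Omega_k^* \subseteq \R^2 \setminus Im(H)$, so $H - z$ deforms through loops missing $0$ to the constant-time loop $H - c_k$. Since $j(0, H - c_k) = j(c_k, H) = j(\Omega_k, H) =: j_k$ and the affine map $u = \lambda(z - H)$ preserves winding around $0$, the image of $\mathcal A_k$ in $u$-space lies entirely inside $\mathcal E^{j_k}$.

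Let $w_1, \ldots, w_s$ be the distinct values in $\{j_1, \ldots, j_r\}$ and $I_{w_i} := \{k : j_k = w_i\}$. By additivity of degree (under the identification above),
\[
deg\Bigl(\mathcal F,\, \mathcal E^{w_i} \setminus \overline{\textstyle\bigcup_{k \in I_{w_i}} \mathcal A_k},\, -\lambda_* H\Bigr) \;=\; 0 - |I_{w_i}| \;\neq\; 0,
\]
so each $\mathcal E^{w_i}$ contains at least one extra solution, lying outside every $\mathcal A_k$. These $s$ extras sit in pairwise disjoint winding classes, hence are mutually distinct and distinct from the $r$ solutions produced by Theorem \ref{oldthm}, yielding $r+s$ in total. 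I expect the main technical point to be the clean identification of each $\mathcal A_k$ with a definite winding class $\mathcal E^{j_k}$; this requires shrinking $\Omega_k^*$ to a convex (or at least path-connected) neighborhood so that the homotopy $z \leadsto c_k$ remains inside $\R^2 \setminus Im(H)$, and tracking orientations carefully so that $j(0, H - c_k)$ really coincides with $j(\Omega_k, H)$.
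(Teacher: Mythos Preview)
Your proof is correct and matches the paper's approach exactly: partition $\mathcal E$ by winding number, use continuation to get degree zero on each piece, then excise the $\mathcal A_k$'s to find one extra solution per occupied winding class. Your homotopy argument placing $\mathcal A_k$ into the correct class is more explicit than the paper's bare assertion $\mathcal A_k \subseteq \mathcal D_{j(\Omega_k,H)}$; note that convexity of $\Omega_k^*$ is convenient but not essential, since the bounded components $\Omega_k$ of $\R^2 \setminus Im(H)$ are automatically simply connected (as $Im(H)$ is connected), so any loop $z$ with $Im(z)\subset\Omega_k$ is already null-homotopic in $\R^2\setminus Im(H)$.
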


\begin{proof}
Let $\mathcal E$ be defined as before and 
consider the `winding number function' $J:\mathcal E \to \Z$ defined by $J(x) = j(0,x)$, which 
determines in $\mathcal E$ the connected components $\mathcal E_i =\{u \in \mathcal E/ J(x)=i\}$ for $i \in \Z$.
Since $\partial \mathcal E = \bigcup_{i \in \Z} \partial \mathcal E_i$, we deduce from Lemmas \ref{boundsolutions} and \ref{nosolutions} that $deg(\mathcal F, \mathcal E_i, -\lambda H) = 0$ for every $i$.
Using again formula \eqref{ED}, we obtain the decomposition $\mathcal D = \bigcup_{i \in \Z} \mathcal D_i$ and $deg(\mathcal G_{\epsilon_*}, \mathcal D_i, 0) = 0$.
Repeating the argument of the previous theorem for each $k$, $\mathcal A_k \subseteq \mathcal D_i$ where $i = j(\Omega_k, H)$, so if
\[\mathcal B_i = \mathcal D_i \setminus \bigcup_{k/ j(\Omega_k, H)=i} \mathcal A_k\]
then $deg(\mathcal G_{\epsilon_*}, \mathcal B_i, 0) = - \#\{k/ j(\Omega_k, H)=i\}$ (which might be eventually $0$, if there is no $k$ such that $j(\Omega_k, H)=i$).

We conclude that there exists one solution for each $\mathcal B_{j(\Omega_k, H)}$.

\end{proof}

As an example, in \cite{O1} we considered, using complex notation, $h(t) := e^{i t} + 27 e^{3 i t}$.
The function $H(t) = e^{i t} +3 e^{3 i t}$ is a parameterization of the epicycloid:

\begin{center}
	\includegraphics[bb=1 1 240 160, scale=.4]{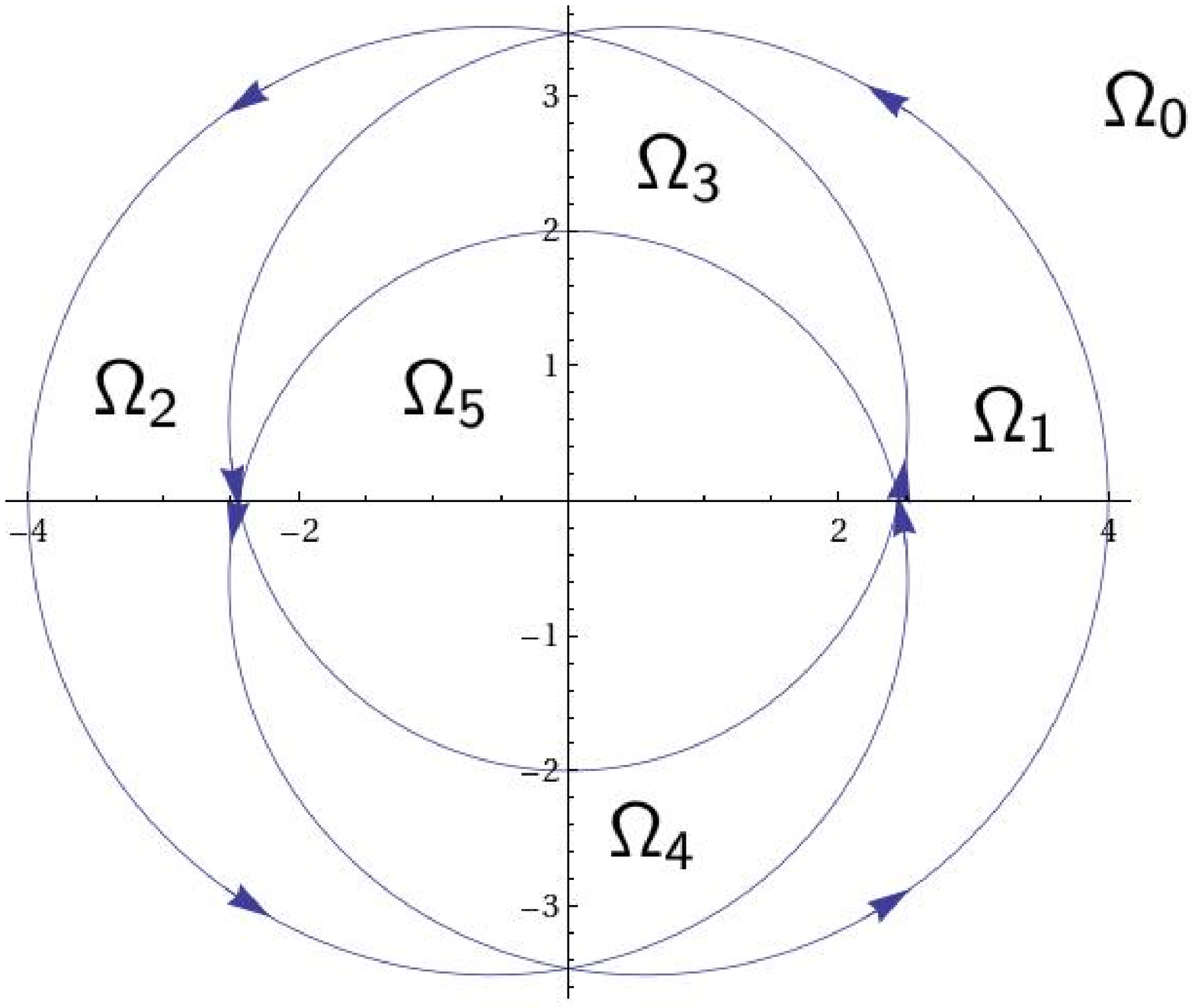}
\end{center}

As observed in \cite{O1}, $\R^2 \setminus H(\R)$ has five bounded connected components with corresponding indices $3,2,2,1,1$. 
Hence we obtained five periodic solutions.
But according to theorem \ref{mainthmrepimproved}, in the repulsive case the number of solutions is at least $8$.


In general, there is no way to guarantee that solutions of a given problem are non-degenerate without knowing them explicitly.
But since our functional is smooth, this property can be achieved by arbitrarily small perturbations. It is in some sense a `generic' property. This is the content of the next section.

\section{Genericity}
\label{genericity}
Some of the results of this section 
are proved in \cite{O2} in a more general situation. We include the proofs for the sake of completeness and clarity.

Let us define the spaces
\[C^i_{per} = \{x \in C^i(I, \R^n): x^{(j)}(0) = x^{(j)}(1), 0 \leq j < i\}\]
so $C^0_{per}$ is just $C^0 = C^0(I,\R^n)$.
Also, if $X$ is any of these spaces, we define

\[\tilde{X}= \{x \in X: \overline{x} = 0\}\]

We shall prove the following theorem:

\begin{theorem}
\label{mainthmgen}
There is a residual subset $\Sigma^2 \subseteq \tilde{C}^2_{per}$ with the following property:
if $O \subseteq C^2_{per}$ is an open bounded set such that $deg(\mathcal F,O,y) = n$ for some $y \in \Sigma^2$, 
then the equation $\mathcal F(x) = y$ has at least $n$ distinct solutions.
\end{theorem}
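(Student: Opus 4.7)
My approach would be the standard Sard--Smale route to genericity of non-degeneracy, followed by a routine degree-counting argument. First I would pin down the structure of $\mathcal F$. Since Leray--Schauder degree is being used, $\mathcal F$ must be a compact perturbation of the identity; concretely, on the open set $U = \{u \in C^2_{per} : u(t) \neq 0 \text{ for all } t\}$, $\mathcal F$ has the form $\mathcal F(u) = u - \mathcal K(u)$ with $\mathcal K$ completely continuous (essentially the composition of the singular nonlinearity $u \mapsto u/|u|^{q+1}$ with the Green's operator of $-d^2/dt^2$ on the mean-zero periodic subspace). Because $u/|u|^{q+1}$ is real-analytic on $\R^n\setminus\{0\}$, $\mathcal F$ is analytic on $U$, and the derivative $D\mathcal F(u) = I - D\mathcal K(u)$ is a compact perturbation of the identity at every $u$; hence $\mathcal F$ is a $C^\infty$ Fredholm map of index zero.

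I would then invoke the Sard--Smale theorem, which guarantees that the set $\Sigma^2$ of regular values of $\mathcal F$ is residual in the target $\tilde{C}^2_{per}$. Regularity at $y$ means that for every $u \in \mathcal F^{-1}(y)$ the operator $D\mathcal F(u)$ is surjective; combined with the index-zero condition this forces $D\mathcal F(u)$ to be an isomorphism.

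Finally, fix $y \in \Sigma^2$ and assume $\deg(\mathcal F, O, y) = n$ for some bounded open $O$. Since $y \notin \mathcal F(\partial O)$ and $D\mathcal F(u)$ is invertible at every solution $u \in O$, the inverse function theorem makes each such $u$ isolated, with local Leray--Schauder index $\pm 1$. Compactness of the solution set in $\overline{O}$ follows from rewriting $u = y + \mathcal K(u)$ and using complete continuity of $\mathcal K$; together with isolation this forces finitely many solutions $u_1,\dots,u_N \in O$. Additivity of the degree then gives $n = \sum_{j=1}^N \pm 1$, so $N \geq |n| \geq n$.

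The main obstacle I anticipate is not in the Sard--Smale application itself but in the careful verification of the Fredholm/compact structure of $\mathcal F$ on the correct Banach spaces: this hinges on the explicit construction of $\mathcal F$ that is still to be developed in this section, and on identifying the target carefully so that $\Sigma^2$ really sits inside $\tilde{C}^2_{per}$ as the statement demands (in particular that $\mathcal F - I$ takes values in the mean-zero subspace). Once that structural setup is in place, both the transversality step and the degree computation are essentially routine.
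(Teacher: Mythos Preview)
Your overall architecture (Sard--Smale for genericity of non-degeneracy, then local index $\pm 1$ and additivity) is the same as the paper's, and the final degree-counting paragraph is fine. The genuine gap is precisely the obstacle you flag at the end, and your proposed resolution of it is not correct.

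The issue is this: the functional $\mathcal F$ (equivalently $\hat F(x)=x''-N(x)$) does \emph{not} map into the mean-zero subspace. Its average is $\overline{\mathcal F(x)} = QN(x) = \overline{N(x)}$, which is generically nonzero, so neither $\mathcal F$ nor $\mathcal F - I$ takes values in $\tilde C^2_{per}$. Consequently Sard--Smale applied to $\mathcal F:U\subset C^2_{per}\to C^2_{per}$ yields a residual set of regular values in $C^2_{per}$; since $\tilde C^2_{per}$ is a closed proper subspace (hence nowhere dense), this says nothing about regular values inside $\tilde C^2_{per}$. The paper states this difficulty explicitly.

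The paper's fix is not to change the target but to restrict the \emph{domain}: one shows $QN:C^2_{per}\to\R^n$ has $0$ as a regular value, so $M=(QN)^{-1}(0)$ is a Banach submanifold, and then $\hat F|_M:M\to \tilde C^0_{per}$ really does land in the mean-zero space. A snake-lemma argument shows this restriction is still Fredholm of index $0$, and that a point is regular for $\hat F|_M$ iff it is regular for $\hat F$. Sard--Smale applied to $\hat F|_M$ then gives a residual $\Sigma^0\subset \tilde C^0_{per}$, and $\Sigma^2=D^{-1}\Sigma^0\subset \tilde C^2_{per}$. This restriction-to-$M$ step is the missing idea in your proposal; once it is in place, your endgame goes through verbatim.
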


As a corollary, we shall obtain:
\begin{corollary}
In the conditions of 
Theorem \ref{mainthmrep}, there exists a residual subset $\Sigma^0\subset \tilde{C}^0_{per}$
such that \eqref{equa} has at least $2r$ solutions for $\lambda h \in \Sigma^0$ and $\lambda$ large enough.
\end{corollary}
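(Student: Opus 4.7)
The approach is to pull back the residual set $\Sigma^2 \subseteq \tilde{C}^2_{per}$ provided by Theorem \ref{mainthmgen} to a residual set of $\tilde{C}^0_{per}$ through the second-primitive operator, and then to feed into Theorem \ref{mainthmgen} the two families of disjoint open sets whose degrees with respect to $-\lambda H$ were computed in the proof of Theorem \ref{mainthmrepimproved}: the $r$ sets of degree $+1$ coming from the $\mathcal{A}_k$'s, and the sets of total degree $-r$ coming from the $\mathcal{B}_i$'s.

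The first step is to observe that the map $\Phi:\tilde{C}^0_{per} \to \tilde{C}^2_{per}$ sending $v$ to the unique periodic zero-mean function $V$ with $V''=-v$ is a continuous linear isomorphism of Banach spaces. Consequently,
\[
\Sigma^0 := \{\, v \in \tilde{C}^0_{per} : -\Phi(v) \in \Sigma^2 \,\}
\]
is residual in $\tilde{C}^0_{per}$, being the preimage of the residual set $-\Sigma^2$ under the homeomorphism $\Phi$. With this definition, the hypothesis $\lambda h \in \Sigma^0$ becomes $-\lambda H \in \Sigma^2$.

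In the second step I would fix $\lambda$ large enough, i.e.\ $\epsilon_* = \lambda^{-(q+1)}$ small enough, for the continuation arguments used in Theorems \ref{mainthmrep} and \ref{mainthmrepimproved} to be valid. Those proofs furnish pairwise disjoint open sets in $C^2_{per}$: the $r$ sets $\mathcal{A}_1,\dots,\mathcal{A}_r$ with $deg(\mathcal{G}_{\epsilon_*},\mathcal{A}_k,0)=1$, and the sets $\mathcal{B}_i$ indexed by the distinct values $i \in \{j(\Omega_k,H):1\le k\le r\}$ with $deg(\mathcal{G}_{\epsilon_*},\mathcal{B}_i,0)=-\#\{k:j(\Omega_k,H)=i\}$. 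Through the affine conjugation \eqref{GF}--\eqref{ED} these translate to pairwise disjoint open subsets of the domain of $\mathcal{F}$ with the same degrees relative to the point $-\lambda H$. Since $-\lambda H \in \Sigma^2$, Theorem \ref{mainthmgen} applies on each of them: the $\mathcal{A}_k$'s contribute $r$ distinct solutions and the $\mathcal{B}_i$'s contribute $\sum_i \#\{k:j(\Omega_k,H)=i\}=r$ further distinct solutions, yielding the required $2r$.

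The step I expect to be the main obstacle is the sign issue in applying Theorem \ref{mainthmgen} to the $\mathcal{B}_i$'s, which carry \emph{negative} Leray--Schauder degree: the literal statement of the theorem pairs degree $n$ with ``at least $n$ distinct solutions'', which is meaningful only for $n\ge 0$. The way around this is that $-\lambda H$ is a regular value of $\mathcal{F}$, so every solution of $\mathcal{F}(u)=-\lambda H$ is non-degenerate and therefore contributes $\pm 1$ to the local degree; hence an open set $O$ contains at least $|deg(\mathcal{F},O,-\lambda H)|$ distinct solutions, which is exactly what is needed to count the $\mathcal{B}_i$ contribution on equal footing with the $\mathcal{A}_k$ contribution. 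A minor bookkeeping check, immediate from the additivity of the degree and the construction $\mathcal{B}_i=\mathcal{D}_i\setminus\bigcup_{k:\,j(\Omega_k,H)=i}\overline{\mathcal{A}_k}$ in the proof of Theorem \ref{mainthmrepimproved}, ensures all the open sets involved are genuinely disjoint.
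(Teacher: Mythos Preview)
Your proposal is correct and matches the derivation the paper intends: the set $\Sigma^0$ you construct coincides with the paper's own $\Sigma^0$ (since $\Sigma^2=D^{-1}(\Sigma^0)$ and $D(-\lambda H)=\lambda h$), and Theorem~\ref{mainthmgen} applied to the disjoint open sets with known degree from Section~\ref{repulsive} gives the $2r$ solutions. The only cosmetic difference is that the coarser decomposition $\mathcal A_1,\dots,\mathcal A_r,\mathcal B$ of Theorem~\ref{mainthmrep} (degrees $1,\dots,1,-r$) already suffices, and your remark that the proof of Theorem~\ref{mainthmgen} really yields at least $|n|$ solutions is exactly the right reading.
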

It is interesting to compare this result with the fact, mentioned 
in \cite{AM}, that $\eqref{equa}$ has no solutions when $\|\lambda h\|_0 \leq \eta$ for some $\eta>0$ depending only on $q$. In particular,
the image of the operator $\mathcal F$ is not dense.

In order to prove our results, let us recall one of the main theorems about genericity in differential geometry.

\begin{theorem}{(Sard-Smale)}
Let $f:X \to Y$ be a Fredholm map between Banach manifolds. i.e, $f$ is $C^1$ and $df(x):T_x X \to T_{f(x)} Y$ is a Fredholm operator.
Then the set of regular values is a residual set in $Y$.
\end{theorem}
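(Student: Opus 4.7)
The plan is to reduce the infinite-dimensional statement to the classical (finite-dimensional) Sard theorem by using the Fredholm hypothesis to factor $f$ locally through a map into a finite-dimensional space, and then to invoke the Baire category theorem to upgrade the local statement to a residual set in $Y$.

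First I would localize. It suffices to show that every point $y_0 \in Y$ has an open neighborhood $V \subseteq Y$ such that the regular values of $f$ inside $V$ contain a dense open subset of $V$: covering $Y$ by countably many such $V$'s (using second countability of the manifolds in question) and intersecting, one gets a countable intersection of dense open sets, which is residual. Points $y_0 \notin f(X)$ are trivially regular, so the issue is local around $x_0 \in f^{-1}(y_0)$.

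Second I would set up the local factorization. Since $df(x_0)$ is Fredholm, split
\[
T_{x_0}X = \ker df(x_0) \oplus X_1, \qquad T_{y_0}Y = \operatorname{Im} df(x_0) \oplus Y_1,
\]
where $\ker df(x_0)$ and $Y_1 \cong \operatorname{coker} df(x_0)$ are finite-dimensional and the restriction of $df(x_0)$ to $X_1$ is an isomorphism onto $\operatorname{Im} df(x_0)$. Applying the implicit function theorem to the $X_1 \to \operatorname{Im} df(x_0)$ component of $f$ gives local coordinates in which $f$ takes the form $(a,b) \mapsto (b, \varphi(a,b))$, where $\varphi$ maps into the finite-dimensional $Y_1$. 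A straightforward computation shows that $(a,b)$ is a critical point of $f$ if and only if $a$ is a critical point of the finite-dimensional smooth map $a \mapsto \varphi(a,b)$, so the set of critical values of $f$ in $V$ coincides with the set of critical values of a smooth map whose target is finite-dimensional.

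Third I would apply the classical Sard theorem to this finite-dimensional reduction: its critical values form a set of Lebesgue measure zero in $Y_1$, hence they have empty interior in $V$, and openness of the regular set follows from lower semicontinuity of the rank. Combining with the Baire patching from the first step completes the proof. The main obstacle—and the only genuinely delicate point—is the local factorization and the regularity assumption: classical Sard requires the map to be $C^r$ with $r > \max\{0,\operatorname{ind}(f)\}$, so to be rigorous one must either strengthen the hypothesis from $C^1$ to a suitable $C^r$, or restrict attention to the non-positive index case (which is precisely the setting in which the theorem will be applied to $\mathcal F$).
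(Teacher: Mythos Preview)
The paper does not prove this statement: the Sard--Smale theorem is merely recalled as a classical result (``let us recall one of the main theorems about genericity in differential geometry'') and then applied as a black box to the operator $F:M\to\tilde C^0_{per}$ constructed in Lemma~\ref{fredholmrestriction}. So there is no paper proof to compare against.

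Your sketch is essentially Smale's original argument, and the outline is sound. Two technical points are worth tightening. First, the set of regular values need not be open globally, so the Baire patching should not proceed by covering $Y$; rather, one covers $X$ by countably many small closed sets (e.g.\ closed coordinate balls), shows that the critical values emanating from each such set are closed (properness on small balls) and have empty interior (by the finite-dimensional reduction and Sard), and concludes that the critical values form an $F_\sigma$ meager set. Your phrase ``openness of the regular set follows from lower semicontinuity of the rank'' is not quite the right mechanism here. Second, you correctly flag the regularity issue: the $C^1$ hypothesis as stated is insufficient for Sard when $\operatorname{ind}(f)>0$, and one needs $C^r$ with $r>\max\{0,\operatorname{ind}(f)\}$. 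In the paper's application the index is zero (Lemma~\ref{fredholmrestriction}), so $C^1$ suffices and your caveat is exactly on point.
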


The main difficulty for applying the Sard-Smale theorem to our situation is the fact that we 
need to solve an equation of the form $\hat{F}(x) = -\lambda H$ with $H \in \tilde{C}^0$,
but $C^0 \setminus \tilde{C}^0$ is already a residual set. No information can be obtained from the theorem applied to $\hat{F}:C^2 \to C^0$.
Thus, we need to study a suitable restriction of the functional. 

Consider the operators
\[N:\mathcal U^i \subseteq C^i \to C^i,\;\; N(u)(t) = n(u(t)) := \frac {u(t)}{|u(t)|^{q+1}},\]
\[Q:C^i \to \R^n,\;\; Q(u) = \overline{u}\]
where $\mathcal U^i = \{x \in C^i:x(t) \neq 0, \;\; \forall t\}$.

\begin{lemma}
$N:\mathcal U^2 \to C^0$ is $C^1$. In particular, as $Q$ is linear and continuous, then
$QN$ is $C^1$.
\end{lemma}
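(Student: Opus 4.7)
The plan is to reduce the statement to the standard fact that the Nemytskii (superposition) operator associated with a $C^1$ map is $C^1$ between spaces of continuous functions. Observe first that $n:\R^n\setminus\{0\}\to\R^n$, $n(v)=v/|v|^{q+1}$, is in fact $C^\infty$ away from the origin, with Jacobian
\[Dn(v)=\frac{1}{|v|^{q+1}}\mathrm{Id}-(q+1)\frac{v\otimes v}{|v|^{q+3}}.\]
Next, factor $N$ as $N=\widetilde{N}\circ \iota$, where $\iota:C^2\hookrightarrow C^0$ is the (bounded, hence $C^\infty$) inclusion and $\widetilde{N}:\mathcal U^0\to C^0$ is defined by the same formula. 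It therefore suffices to show $\widetilde{N}$ is $C^1$.

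Fix $u\in\mathcal U^0$. Since $u(I)$ is compact in $\R^n\setminus\{0\}$, there exists a compact neighbourhood $K$ of $u(I)$ in $\R^n\setminus\{0\}$ and a number $\rho>0$ such that $\|v\|_\infty<\rho$ implies $(u+v)(I)\subset K$. Define the candidate differential $L_u:C^0\to C^0$ by $(L_u v)(t)=Dn(u(t))\,v(t)$; it is linear and bounded, with $\|L_u\|\le\sup_{w\in K}\|Dn(w)\|<\infty$. Using the mean-value identity
\[n(u(t)+v(t))-n(u(t))-Dn(u(t))v(t)=\int_0^1\bigl[Dn(u(t)+sv(t))-Dn(u(t))\bigr]v(t)\,ds,\]
and the uniform continuity of $Dn$ on $K$, for every $\varepsilon>0$ we find $\delta<\rho$ such that $\|v\|_\infty<\delta$ gives $\|\widetilde N(u+v)-\widetilde N(u)-L_u v\|_\infty\le\varepsilon\|v\|_\infty$. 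Hence $\widetilde N$ is Fréchet differentiable at $u$ with $D\widetilde N(u)=L_u$.

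To upgrade this to $C^1$, I would show that $u\mapsto L_u$ is continuous from $\mathcal U^0$ into $\mathcal L(C^0,C^0)$: if $u_k\to u$ in $C^0$, then for large $k$ both $u_k(I)$ and $u(I)$ lie in $K$, and uniform continuity of $Dn$ on $K$ yields $\sup_t\|Dn(u_k(t))-Dn(u(t))\|\to 0$, which is exactly the operator-norm convergence $L_{u_k}\to L_u$. Composing with $\iota$ and its adjoint in the target then gives $DN(u)=L_u\circ\iota\in\mathcal L(C^2,C^0)$ continuously in $u$.

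No step is genuinely hard: the only mildly delicate point is making sure the pointwise candidate $L_u$ really represents the Fréchet derivative in the sup-norm, which is handled by the integral form of the remainder together with compactness of $u(I)$ in $\R^n\setminus\{0\}$. The “in particular” assertion that $QN$ is $C^1$ follows at once, since $Q:C^0\to\R^n$ is a bounded linear map and the chain rule applies.
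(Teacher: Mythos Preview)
Your argument is correct and follows essentially the same strategy as the paper: both establish Fr\'echet differentiability of the Nemytskii operator by controlling the pointwise Taylor remainder of $n$ uniformly on a compact set avoiding the origin, with the candidate derivative $(L_u v)(t)=Dn(u(t))v(t)$. Your version is in fact more complete---the paper only writes out the differentiability step (via a quadratic remainder bound $|R|\le C|v|^2$), whereas you use the integral form of the remainder and also verify the continuity of $u\mapsto L_u$ needed for the $C^1$ claim; the factorization through $\iota:C^2\hookrightarrow C^0$ is a tidy simplification, though the phrase ``its adjoint in the target'' is unnecessary since $DN(u)=L_u\circ\iota$ already lands in $\mathcal L(C^2,C^0)$.
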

\begin{proof}
Take $x \in \mathcal U^2$ and let $\eta:=d(Im(x),0)/2$. 
If $v \in T_x \mathcal U^2 = C^2$ satisfies $\|v\|_\infty\le \eta$, then 
\[n(x(t) + v(t)) = n(x(t)) + dn(x(t))(v(t)) + R(v(t)), \;\;\; R(x) = o(x).\]
In fact, we have $|R(x)| \leq C |x|^2$ with $C$ depending only on $\eta$.
Then
\[N(x+v) = N(x) + dN(x)(v) + R \circ v, \;\;\; \|R \circ v\|_\infty = |o(\|v\|_\infty)| \leq o(\|v\|_{C^2}) \]
and the proof follows.

\end{proof}

\begin{lemma}
The operator $QN:C^2_{per} \to \R^n$ has $0$ as a regular value.
In particular, the set $M := \{x \in C^2_{per}: QN(x) = 0\}$ is a differentiable manifold.
\end{lemma}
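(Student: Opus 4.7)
The plan is to verify that $d(QN)(x):C^2_{per}\to\R^n$ is surjective at every $x\in M$; since the target is finite dimensional, this is the only content of ``$0$ is a regular value'', and the implicit function theorem then immediately gives the manifold structure on $M$.

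First I would compute the derivative. Since $Q$ is linear and $N$ is the Nemitskii operator associated to $n(u)=u/|u|^{q+1}$, we have
\[
d(QN)(x)(v)=\int_0^1 dn(x(t))\,v(t)\,dt,
\]
and a direct differentiation gives
\[
dn(u)=\frac{1}{|u|^{q+1}}\!\left(I-(q+1)\,\frac{u\,u^{T}}{|u|^{2}}\right).
\]
The key algebraic observation, which does the real work, is that $dn(u)$ is invertible for every $u\neq 0$: the bracket is $I-(q+1)\hat u\hat u^{T}$, a symmetric matrix whose eigenvalues are $-q$ in the direction $\hat u$ and $1$ on $\hat u^{\perp}$, and all of these are nonzero since $q\geq 2$.

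With this in hand, surjectivity is easy. Given an arbitrary $c\in\R^n$, define
\[
v(t):=dn(x(t))^{-1}c.
\]
Because $x\in C^2_{per}$ takes values in $\mathcal{U}^2$ and the map $u\mapsto dn(u)^{-1}$ is smooth off the origin, $v$ lies in $C^2_{per}$. A direct computation then gives $d(QN)(x)(v)=\int_0^1 c\,dt=c$, so $d(QN)(x)$ admits a bounded right inverse on the constants and is in particular surjective. Hence $0$ is a regular value of $QN$, and the implicit function theorem for maps into $\R^n$ shows that $M$ is a $C^1$-submanifold of $C^2_{per}$ of codimension $n$ (actually $C^\infty$ by the earlier smoothness lemma).

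The only nontrivial step is the invertibility of $dn(u)$; this is where the hypothesis $q\geq 2$ (or even just $q\neq -1$) enters, and it is what makes the pointwise ``right inverse'' $v(t)=dn(x(t))^{-1}c$ well defined and regular enough to lie in $C^2_{per}$. Once that is checked the remainder of the argument is purely formal.
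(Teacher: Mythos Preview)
Your proof is correct and follows exactly the same strategy as the paper: show surjectivity of $d(QN)(x)$ by taking $v(t)=dn(x(t))^{-1}w$ and computing $d(QN)(x)(v)=w$. In fact you supply a detail the paper leaves implicit, namely the explicit computation of $dn(u)$ and the verification that its eigenvalues $-q$ and $1$ are nonzero, so that the pointwise inverse is well defined and smooth.
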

\begin{proof}
For each $w \in \R^n$ take $v(t) = dn(x(t))^{-1}(w)$, then
\[d(QN)(x)(v) = Q(dN(x)(v)) = \int_0^1 dn(x(t))(v(t)) dt = w\]
and hence $d(QN)(x)$ is an epimorphism for every $x$.
\end{proof}

Let us consider the operator 
$\hat D:{C}^2_{per} \to {C}^0_{per}$ defined by $\hat D(x):=x''$, so its restriction 
$D:\tilde{C}^2_{per} \to \tilde{C}^0_{per}$ is an isomorphism of  Banach spaces.
Let $\hat{F}: \mathcal U^2\cap C^2_{per} \to C^0_{per}$ be given by
\[\hat{F}(x) = D(x) - N(x)\]
and consider its restriction $F:M \to \tilde{C}^0_{per}$.

\begin{lemma}
\label{fredholmrestriction}
The operator $F$ is a Fredholm map of index $0$. 
\end{lemma}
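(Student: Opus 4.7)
The plan is to deduce the Fredholm property of the restriction $F$ from that of the ambient operator $\hat F$, and then show that the index is preserved under this restriction.

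First I would verify that $\hat F: \mathcal U^2 \cap C^2_{per} \to C^0_{per}$ is itself Fredholm of index $0$. The linear operator $\hat D: C^2_{per} \to C^0_{per}$, $x \mapsto x''$, has $\ker \hat D = \R^n$ (the constants) and $\operatorname{Im} \hat D = \tilde{C}^0_{per}$, so $\hat D$ is Fredholm of index $0$. The differential $dN(x): C^2_{per} \to C^0_{per}$, which exists by the first lemma of this section, factors through the compact embedding $C^2 \hookrightarrow C^0$ (applied to $v \mapsto dn(x)\cdot v$), and hence is a compact linear operator. Therefore $d\hat F(x) = \hat D - dN(x)$ is a compact perturbation of a Fredholm operator of index $0$, and is itself Fredholm of index $0$ at every $x$.

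Next I would identify $M$ with the preimage $\hat F^{-1}(\tilde{C}^0_{per})$. Indeed, since $Q \circ \hat D = 0$, we have $Q \circ \hat F = -Q \circ N = -QN$, so the condition $QN(x) = 0$ is precisely $\hat F(x) \in \ker Q = \tilde{C}^0_{per}$. The previous lemma guarantees that $QN$ is a submersion, so $M$ is a smooth codimension-$n$ submanifold of $\mathcal U^2 \cap C^2_{per}$, and its tangent space is $T_x M = \ker d(QN)(x) = (d\hat F(x))^{-1}(\tilde{C}^0_{per})$. The restriction $F$ is then $\hat F|_M$ with codomain $\tilde{C}^0_{per}$, and its differential $dF(x)$ is simply the restriction of $d\hat F(x)$ to $T_x M$.

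Finally I would verify that restricting a Fredholm map of index $0$ to the preimage of a finite-codimensional subspace preserves both the kernel and the cokernel dimensions. On the kernel side, any $v$ with $d\hat F(x)v = 0$ automatically lies in $T_x M$, so $\ker dF(x) = \ker d\hat F(x)$. On the cokernel side, $\operatorname{Im} dF(x) = d\hat F(x)(T_x M) = \operatorname{Im} d\hat F(x) \cap \tilde{C}^0_{per}$, and I would apply the second isomorphism theorem together with the surjectivity of $Q \circ d\hat F(x) = d(QN)(x)$ onto $\R^n$ (which gives $C^0_{per} = \tilde{C}^0_{per} + \operatorname{Im} d\hat F(x)$) to obtain
\[\tilde{C}^0_{per}/\operatorname{Im} dF(x) \;\cong\; C^0_{per}/\operatorname{Im} d\hat F(x) = \operatorname{coker} d\hat F(x).\]
Thus $\dim \ker dF(x) - \dim \operatorname{coker} dF(x) = \operatorname{ind} d\hat F(x) = 0$, proving the claim.

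The routine but mildly technical step is checking that $dN(x)$ is genuinely compact from $C^2_{per}$ to $C^0_{per}$; the main conceptual point, and the part I expect to require the most care, is the algebraic bookkeeping in the third paragraph that identifies the cokernel of $dF(x)$ with that of $d\hat F(x)$, since this is where the codimension-$n$ restriction on the target is exactly compensated by the codimension-$n$ restriction on the source.
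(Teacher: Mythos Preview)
Your proof is correct. The overall architecture matches the paper's: first show $d\hat F(x)=\hat D - dN(x)$ is Fredholm of index $0$ as a compact perturbation of $\hat D$, then argue that restricting to the codimension-$n$ submanifold $M$ on the source and to $\tilde C^0_{per}$ on the target preserves the index.

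Where you diverge is in the algebraic step. The paper sets up the commutative diagram
\[
\xymatrix{
0 \ar[r] & T_xM \ar[r] \ar[d]^{dF(x)} & C^2_{per} \ar[r] \ar[d]^{d\hat F(x)} & \R^n \ar[r] \ar[d]^{\iota} & 0\\
0 \ar[r] & \tilde C^0_{per} \ar[r] & C^0_{per} \ar[r] & \R^n \ar[r] & 0
}
\]
and invokes the snake lemma to read off $\operatorname{ind}(dF)-\operatorname{ind}(d\hat F)+\operatorname{ind}(\iota)=0$, then checks that $\iota=\mathrm{id}$ because $Q\,d\hat F = -d(QN)$. You instead compute the kernel and cokernel of $dF(x)$ directly, using the second isomorphism theorem together with the surjectivity of $d(QN)(x)$ to identify $\operatorname{coker} dF(x)\cong \operatorname{coker} d\hat F(x)$. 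Both routes are standard; the snake-lemma version is slightly more packaged and immediately yields the follow-up lemma (that $dF(x)$ is an isomorphism iff $d\hat F(x)$ is), while your hands-on computation makes explicit exactly where the surjectivity of $d(QN)(x)$ enters and would be easier to follow for a reader unfamiliar with the snake lemma.
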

\begin{proof}
Clearly $d\hat{F}(x) = D - dN(x)$ is the sum of a Fredholm map of index $0$ and a compact 
operator. Thus, $d\hat{F}(x)$ is a Fredholm linear operator.

The fact that $dF(x):T_x M \to \tilde{C}^0_{per}$ is Fredholm of index $0$ follows from the following 
general argument on vector spaces:

Let $\hat{L}:X \to Y$ be any linear Fredholm operator and let $L:V \to W$ be its restriction to spaces of finite codimension.  
Consider the following commutative diagram of Banach spaces 

\begin{displaymath}
	\xymatrix
	{
	0 \ar[r] & V \ar[r] \ar[d]^L & X \ar[r] \ar[d]^{\hat{L}} & X/V \ar[r] \ar[d]^{\iota} & 0\\
	0 \ar[r] & W \ar[r] & Y \ar[r] & Y/W \ar[r] & 0
	}
\end{displaymath}
where $\iota$ is canonical.
 
The long exact sequence given by the snake lemma has only finite dimensional spaces, which implies that $ind(L) - ind(\hat{L}) + ind(\iota) = 0$.

In our particular case, $X/V = Y/W = \R^n$ and $Q d_x \hat F = d_x QN$, so $\iota = id$.
\end{proof}

The following lemma is an immediate consequence of the above argument: 
\begin{lemma}
For $x \in M$, $d_x F$ is an isomorphism if and only if $d_x \hat{F}$ is an isomorphism.
\end{lemma}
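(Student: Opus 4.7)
The plan is to read off the conclusion directly from the snake lemma diagram set up in the proof of Lemma \ref{fredholmrestriction}. In that diagram we took $X = C^2_{per}$, $V = T_x M$, $Y = C^0_{per}$, $W = \tilde{C}^0_{per}$, with $L = d_x F$, $\hat L = d_x \hat F$, and we verified that the induced map $\iota : X/V \to Y/W$ on the finite-dimensional quotients (both isomorphic to $\R^n$ via $Q$) is the identity. So all of the input needed is already in hand.

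First I would recall that the snake lemma applied to this short-exact-row diagram yields the six term exact sequence
\[
0 \to \ker L \to \ker \hat L \to \ker \iota \to \operatorname{coker} L \to \operatorname{coker} \hat L \to \operatorname{coker} \iota \to 0.
\]
Since $\iota = \mathrm{id}_{\R^n}$ has $\ker\iota = 0$ and $\operatorname{coker}\iota = 0$, this degenerates to isomorphisms
\[
\ker L \;\cong\; \ker \hat L, \qquad \operatorname{coker} L \;\cong\; \operatorname{coker} \hat L.
\]

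Now I would finish using the Fredholm property. By Lemma \ref{fredholmrestriction} both $L$ and $\hat L$ are Fredholm of index $0$, so each one is an isomorphism if and only if its kernel is trivial (equivalently, if and only if its cokernel is trivial). Combined with the isomorphisms above, triviality of $\ker L$ is equivalent to triviality of $\ker \hat L$, and likewise for the cokernels, which gives the desired equivalence.

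The only potential subtlety, and hence the one point worth double-checking rather than a real obstacle, is that the map $\iota$ really is the identity under the natural identifications $X/V \cong \R^n \cong Y/W$. This was recorded at the end of the proof of Lemma \ref{fredholmrestriction} via the computation $Q \circ d_x \hat F = d_x(QN)$; restated here, it says that $\hat F$ already maps the $\R^n$-worth of directions transverse to $M$ isomorphically onto the $\R^n$-worth of mean-value directions in $C^0_{per}$. Once this is noted the argument is purely diagrammatic.
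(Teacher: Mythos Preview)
Your proof is correct and is exactly the argument the paper has in mind when it says the lemma ``is an immediate consequence of the above argument'': you feed the snake-lemma sequence from Lemma~\ref{fredholmrestriction} with $\ker\iota=\operatorname{coker}\iota=0$ to get $\ker L\cong\ker\hat L$ and $\operatorname{coker} L\cong\operatorname{coker}\hat L$, and then conclude. The only cosmetic remark is that once you have both kernel and cokernel isomorphisms you do not even need to invoke the Fredholm index-zero property; either route is fine.
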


Combining the previous results, we obtain:

\begin{lemma}
The set of regular values of $F$ is a residual set $\Sigma^0 \subseteq \tilde{C}^0_{per}$. Moreover:

\begin{enumerate}
 \item 
For $y \in \tilde{C}^0_{per}\;\; F^{-1}(y) = \hat{F}^{-1}(y)$ and regular values of $F$ are also regular values of $\hat{F}$.
\item
For each $y \in \Sigma^0$ and $x \in F^{-1}(y)$, $\hat{F}$ is a local diffeomorphism between neighborhoods of $x \in C^2_{per}$ and $y \in C^0_{per}$.
\end{enumerate}

\end{lemma}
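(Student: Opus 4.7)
The plan is to apply the Sard--Smale theorem directly to $F:M \to \tilde{C}^0_{per}$ and then transfer the conclusion back to $\hat F$ using the two lemmas that have just been proved. First I would check that Sard--Smale applies: $M$ is a $C^1$ Banach manifold (by the submersion lemma for $QN$), $F$ is $C^1$ since $N$ is $C^1$ and $D$ is bounded linear, and Lemma \ref{fredholmrestriction} says $F$ is Fredholm of index $0$. The theorem therefore produces a residual set $\Sigma^0 \subseteq \tilde{C}^0_{per}$ consisting of regular values of $F$.

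For item (1), the key point is that the condition $y \in \tilde C^0_{per}$ forces preimages under $\hat F$ to land inside $M$. Indeed, if $\hat F(x) = y$, taking averages yields
\[
0 = Qy = Q(Dx) - QN(x) = \bigl(x'(1)-x'(0)\bigr) - QN(x) = -QN(x),
\]
since $x \in C^2_{per}$. Hence $x \in M$, so $\hat F^{-1}(y) = F^{-1}(y)$. Given this identification, the preceding lemma tells us that at every $x$ in this fiber, $d_x F$ is an isomorphism if and only if $d_x \hat F$ is; so regular values of $F$ are automatically regular values of $\hat F$.

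For item (2), fix $y \in \Sigma^0$ and $x \in F^{-1}(y)$. By item (1) combined with the previous lemma, $d_x \hat F : C^2_{per} \to C^0_{per}$ is an isomorphism of Banach spaces, and the inverse function theorem immediately gives a local diffeomorphism between neighborhoods of $x$ and $y$. The only nontrivial step is the translation between $F$ and $\hat F$ carried out in item (1), which hinges on the simple averaging identity $Q \circ D = 0$ on $C^2_{per}$; everything else is a direct invocation of results already assembled above.
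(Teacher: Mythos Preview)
Your proposal is correct and is exactly the argument the paper has in mind: it presents this lemma with the phrase ``Combining the previous results, we obtain'' and no further proof, leaving the reader to assemble Sard--Smale, the identity $Q\hat D=0$ on $C^2_{per}$ (so $\hat F^{-1}(y)\subset M$ when $Qy=0$), the isomorphism-transfer lemma, and the inverse function theorem precisely as you have done.
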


Next, let us compose $\hat{F}$ with the isomorphism $id \oplus D^{-1}$ so we get

\[\mathcal F : C^2_{per} \to C^2_{per}, \mathcal F(x) = QNx \oplus D^{-1}PNx + Px.\]
This is the functional associated to equation \eqref{equa}. It is clear that $u$ is a solution of \eqref{equa} if and only if $\mathcal F(u) = -\lambda H$.
\medskip

Now we are in conditions to prove the main theorem of this section:
\medskip

\begin{potwr}{mainthmgen}
Take $y \in \Sigma^0 \subseteq \tilde{C}^0_{per}$. For every $x \in \mathcal F^{-1}(y)$, the function $\mathcal F$ is a 
diffeomorphism between neighborhoods of $x \in C^2_{per}$ and $D^{-1}(y) \in C^2_{per}$.
Taking $O_x$ a small neighborhood of $x$, from 
the product formula for the Leray-Schauder degree we obtain:
\[deg(\mathcal F,O_x, D^{-1}(y) ) = \pm 1.\]

Finally, take $\Sigma^2 = D^{-1}(\Sigma^0) \subseteq \tilde{C}^2_{per}$, which is residual since $D$ is an isomorphism. 
The result now follows by the excision property of the degree.
\end{potwr}

\section{Some examples}

\label{counterexamples}
\begin{prop}
For the repulsive case, if $Im(H)$ is contained 
in a line then the problem $Q_\epsilon$ has no solution for any $\epsilon$.
\end{prop}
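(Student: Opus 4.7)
My plan is to exploit the translational symmetry in the direction perpendicular to the line $L$ containing $\mathrm{Im}(H)$. Let $\vec v$ be a unit direction of $L$ and let $w$ be any unit vector orthogonal to $\vec v$. Since $\langle H(t),w\rangle$ is constant in $t$ and $h=-H''$, the component of the forcing along $w$ vanishes identically: $\langle h(t),w\rangle\equiv 0$. For a putative solution $u$ of the repulsive problem, I would project the equation onto $w$ and set $\phi(t):=\langle u(t),w\rangle$, reducing it to
\[
\phi''(t) \;=\; \frac{\phi(t)}{|u(t)|^{q+1}}.
\]

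The next step is an energy identity: multiply the scalar equation for $\phi$ by $\phi$ itself and integrate over $[0,1]$. Integration by parts kills the boundary term thanks to periodicity, so
\[
-\int_0^1 \phi'(t)^2\,dt \;=\; \int_0^1 \frac{\phi(t)^2}{|u(t)|^{q+1}}\,dt.
\]
The left side is nonpositive and the right side is nonnegative; moreover $1/|u|^{q+1}$ is a bounded strictly positive function on $I$, because $u$ is continuous and stays away from the singularity. Both sides therefore vanish, and $\phi\equiv 0$. Since $w$ was arbitrary in the orthogonal complement of $\vec v$, this forces $u(t)=\alpha(t)\vec v$ for a continuous nowhere-zero scalar function $\alpha:I\to\R$.

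Substituting back in, the equation collapses to the one-dimensional ODE
\[
\alpha''(t) - \frac{\alpha(t)}{|\alpha(t)|^{q+1}} \;=\; \lambda\,\langle h(t),\vec v\rangle.
\]
Integrating over one period, the $\alpha''$ term drops out by periodicity of $\alpha'$ and the right-hand side integrates to $0$ because $\overline h=0$. What remains is
\[
\int_0^1 \frac{\mathrm{sign}(\alpha(t))}{|\alpha(t)|^{q}}\,dt \;=\;0,
\]
which is impossible: $\alpha$ is continuous and nowhere zero, so $\mathrm{sign}(\alpha)$ is constant and the integrand has a fixed sign with positive absolute value — the desired contradiction.

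The main obstacle is the energy identity for $\phi$, and it is also the only step where the repulsive sign is essential: in the attractive case the same computation produces $\int(\phi')^2 = -\int \phi^2/|u|^{q+1}$, which gives no information, consistent with the fact that the attractive problem can admit line-supported motions. The rest of the argument is a routine reduction to a scalar ODE and a one-line period integral.
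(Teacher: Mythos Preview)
Your argument is correct and follows essentially the same route as the paper: project the equation onto the orthogonal complement of the line, use the energy identity (multiply by the orthogonal component and integrate by parts) to kill that component, then derive a sign contradiction in the remaining one-dimensional equation. The only cosmetic differences are that the paper works in the transformed variable $z$ of \eqref{equa2} (singularity at $H$) rather than in $u$ (singularity at $0$), and in the last step the paper simply observes that the one-dimensional second derivative has a fixed sign, which is equivalent to your period-integral argument.
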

\begin{proof}
Let us take coordinates $(x,y) \in \R \times \R^{n-1}$.
We may assume that the $y$ coordinate of $H$ is zero, that is
$H(t) = (H_1(t),0)$.

Let $z(t) = (x(t), y(t))$ be a solution. Multiply $y''$ by $y$ and integrate, then
\[-\int_0^1 |y'(t)|^2 dt = \int_0^1 \langle y(t), y''(t) \rangle dt = \int_0^1 \langle y(t), \frac {\epsilon y(t)} {|z(t) - H(t)|^{-(q+1)} } \rangle dt \geq 0, \]
so $y \equiv 0$.

Now, as $z$ lies in the same line as $H$, we have $H_1 > x$ or $ H_1 < x$ for all $t$ so either $x''$ is positive or negative.  
This contradicts the fact that $z$ is periodic.

\end{proof}

\begin{example}
The condition in Theorem \ref{oldthm} is not necessary.
\end{example}
Indeed, take $H_l(t) = e^{i l \sin(t)}$.
The curve $H_l \subseteq \R^2$ is degenerate for $l < \pi$, in the sense that $\R^2 \setminus Im(H_l)$ has no bounded connected components.

For $l = \pi$, we may construct as in \cite{O1} an open set $\Omega^*$ with $deg(F,\Omega^*,0) = 1$, where $F$ is the function defined in \eqref{funcF}.
Using the continuity of the Brouwer degree, we deduce that $deg(F,\Omega,0) = 1$ for some $l < \pi$ close to $\pi$. This provides 
a periodic solution of \eqref{equa}, although $\R^2 \setminus Im(H_l)$ has no bounded
connected components.

\begin{theorem}
Assume that $Im(H)$ is not contained in a line. 
Then for $\lambda$ large there exists a solution of the repulsive problem for some reparameterization of $H$.
\end{theorem}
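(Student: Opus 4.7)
The plan is to reduce to a discrete three-point configuration on $Im(H)$: locate three non-collinear points there, exhibit a zero of the corresponding averaged force field $F_\mu$ in the complement of $Im(H)$ with nonzero local degree, and then realize $\mu$ as the weak limit of pushforward measures of actual reparameterizations of $H$. Since $Im(H)$ is not contained in a line, pick non-collinear $p_1,p_2,p_3\in Im(H)$ with preimages $s_i\in S^1$ (i.e.\ $H(s_i)=p_i$) and let $T$ denote the closed triangle they span. For weights $m=(m_1,m_2,m_3)$ in the open simplex $\Delta^2$ consider the discrete measure $\mu=\sum_{i=1}^3 m_i\delta_{p_i}$ and the associated field
\[
F_\mu(x)=\sum_{i=1}^{3}m_i\,\frac{x-p_i}{|x-p_i|^{q+1}}.
\]

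The first step is a global degree computation. Since $F_\mu(x)\sim x/|x|^{q+1}$ as $|x|\to\infty$ and near each $p_i$ is dominated by the outward radial $m_i(x-p_i)/|x-p_i|^{q+1}$, on the annular region $\Omega_{R,\epsilon}=B_R\setminus\bigcup_i\bar B_\epsilon(p_i)$ the Brouwer degree equals $1-3=-2$, so the local indices of the zeros of $F_\mu$ inside $\Omega_{R,\epsilon}$ sum to $-2$. Next I would observe that for any $x_0\in\mathrm{int}(T)$ the three vectors $(x_0-p_i)/|x_0-p_i|^{q+1}$ positively span $\R^2$, so some $m\in\Delta^2$ makes $x_0$ an equilibrium of $F_\mu$; equivalently, the map $m\mapsto$ equilibrium location surjects onto an open subset of $\mathrm{int}(T)$. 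A Sard--Smale argument applied to the parametric map $(x,m)\mapsto F_\mu(x)$ then yields, for a.e.\ $m$, that every zero of $F_\mu$ is non-degenerate. Since the $C^2$-curve $Im(H)$ has empty interior in $\R^2$, a further adjustment of $m$ places some non-degenerate zero $x_0$ in $\mathrm{int}(T)\setminus Im(H)$, and we can fix a closed ball $\bar B_\delta(x_0)\subset\R^2\setminus Im(H)$ on which $deg(F_\mu,B_\delta(x_0),0)=\pm 1$.

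To realize this via a reparameterization, construct smooth homeomorphisms $\phi_n:S^1\to S^1$ that spend Lebesgue fraction $m_i+o(1)$ on shrinking neighborhoods of $s_i$ (moving slowly there and quickly elsewhere). The pushforward measures $\nu_n=(H\circ\phi_n)_*\mathrm{Leb}$, supported on $Im(H)$, then converge weakly to $\mu$. Because the kernel $(x,y)\mapsto(x-y)/|x-y|^{q+1}$ is bounded and equi-Lipschitz in $y$ as $x$ runs over $\bar B_\delta(x_0)$ (where $\mathrm{dist}(x,Im(H))$ stays bounded below), the weak convergence gives $F_{\nu_n}\to F_\mu$ uniformly on $\bar B_\delta(x_0)$, so $deg(F_{\nu_n},B_\delta(x_0),0)=\pm 1$ for $n$ large. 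The continuation framework recalled in the Preliminaries, applied with forcing $h_n=-(H\circ\phi_n)''$ and $D=B_\delta(x_0)$, then produces a solution of \eqref{equa} for the reparameterized $h_n$ and $\lambda$ sufficiently large. The hard part will be the middle step: carrying out the Sard-type transversality argument so that the chosen equilibrium is simultaneously non-degenerate and off the one-dimensional set $Im(H)$, since both conditions must be secured while the parameter $m$ varies only in a two-dimensional simplex.
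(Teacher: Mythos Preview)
Your overall strategy---approximate the occupation measure of $H$ by a discrete measure supported on finitely many points of $Im(H)$, locate a zero of the limiting field $F_\mu$ with nonzero local degree in a ball disjoint from $Im(H)$, and then pass back to an honest reparameterization by weak convergence---is exactly the mechanism the paper uses. The difference is that the paper takes \emph{two} points rather than three, and this choice makes the ``hard part'' you flagged disappear completely.

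With two points $p_1=H(t_0)$, $p_2=H(t_1)$ and weights $a,1-a$, the field
\[
F_0(x)=a\,\frac{x-p_1}{|x-p_1|^{q+1}}+(1-a)\,\frac{x-p_2}{|x-p_2|^{q+1}}
\]
has a \emph{unique} zero $x_0$, which lies on the open segment $(p_1,p_2)$ and is always a non-degenerate saddle of index $-1$ (one checks directly that the Hessian of $\sum |x-p_i|^{-(q-1)}$ has one positive eigenvalue along the chord and one negative eigenvalue transverse to it). Moreover, as $a$ ranges over $(0,1)$ the equilibrium $x_0$ sweeps the entire open chord. The hypothesis that $Im(H)$ is not contained in a line is then used exactly once: it guarantees that some chord of the $C^2$ curve $Im(H)$ is not entirely contained in $Im(H)$ (otherwise $Im(H)$ would be convex, hence two-dimensional). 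Choosing such $t_0,t_1$ and adjusting $a$ places $x_0\notin Im(H)$, and one takes a small ball $U$ around $x_0$ disjoint from $Im(H)$ with $\deg(F_0,U,0)=-1$. The reparameterization step is then the same as yours: approximate the two-point measure by $H\circ s_\epsilon$ with $s_\epsilon$ a smooth monotone map close to a step function.

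Your three-point version is not wrong in spirit, but it trades an explicit computation for a transversality argument that you have not carried out: you need simultaneously that (i) the zero is non-degenerate and (ii) it lies off the one-dimensional set $Im(H)$, and you correctly note that with only a two-parameter family of weights this is not automatic from Sard alone. The two-point reduction removes both obstacles at once: non-degeneracy is automatic, and the freedom in $a$ already lets you avoid $Im(H)$. So your proposal is salvageable, but the paper's route is strictly simpler and uses the non-collinearity hypothesis in a more transparent way.
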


\begin{proof}
Take $s_\epsilon:I \to I$ such that  $s_\epsilon$ is $C^1$ and increasing for $\epsilon > 0$, and $s_0$ is piecewise constant, $s_0([0,a]) = t_0, s_0((a,1]) = t_1$. Define $H_\epsilon(t) = H(s_\epsilon(t))$.
It is easy to see that $F$ has one non-degenerate zero $x_0$ of index $-1$. 
Choosing $a,t_0,t_1$ appropriately we can ensure that $x_0$ is not in $Im(H)$ (here we use the fact that $Im(H)$ is not contained in a line) 
and we take a small neighborhood $U$ of $x_0$ not touching $H$.
We obtain a zero $x_1 \in U$ of $F$ for small values of $\epsilon$ as a non-degenerate zero of index $-1$, which is isolated by $U$.
This point is continued to a small solution of \eqref{equa} for $\lambda$ large enough and such that
\[deg(\mathcal F_\lambda,\mathcal U,0) = -1\]
where
\[\mathcal U = \{x \in C^2_{per}/ Im(x) \subseteq U\}.\]
\end{proof}

\section{Higher dimensions}
\label{higherdimensions}
Theorem \ref{oldthm} and the extensions Theorem \ref{mainthmrep} and Theorem \ref{mainthmgen} can be carried out in dimension 
$n > 2$ without any modification of the proofs, as far as 
we can find open sets $\Omega \in \R^n \setminus Im(H)$ such that the degree over $\Omega$ of the map $F: \R^n \setminus{Im(H)} \to \R^n$ defined in the introduction is well defined and different from zero.

In this section we shall construct open sets with this property.
We make use of singular homology theory with coefficients in a field to obtain information about the degree of $F$.

\subsection{Dimension 3}

For convenience, let us define the function $G:\R^n \setminus Im(H) \to \R$ given by 
\[G(x) := \int_0^1 g(x-H(t))dt= \int_0^1 \frac 1{|x-H(t)|^{q-1}}dt.\]

\begin{theorem}
\label{mainthmR3}
If $\R^3 \setminus Im(H)$ is not simply connected then there exists an open set $\Omega \subseteq \R^3 \setminus Im(H)$ such that $deg(\nabla G,\Omega,0) \neq 0$.
Moreover, if $r:= dim(H_1(\R^3 \setminus Im(H)))$ then $r \neq 0$ and $deg(\nabla G, \Omega, 0) \geq r$.
\end{theorem}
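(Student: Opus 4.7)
My plan is to apply Morse theory to $G$ on a compact cobordism between two regular level sets of $G$ in $\R^3\setminus\mathrm{Im}(H)$, combined with a standard topological computation via Alexander duality.

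\textbf{Cobordism set-up.} Since $G(x)\to 0$ as $|x|\to\infty$ and $G(x)\to+\infty$ as $x\to\mathrm{Im}(H)$, Sard's theorem supplies regular values $0<c_1<c_2$ of $G$: choose $c_1$ small enough that $M_0:=\{G=c_1\}$ is a topological $2$-sphere enclosing $\mathrm{Im}(H)$ at large distance, and $c_2$ large enough that $M_1:=\{G=c_2\}$ lies inside an arbitrarily thin tubular neighborhood of $\mathrm{Im}(H)$. Set $\bar\Omega_0:=\{c_1\le G\le c_2\}$; this is a compact $3$-manifold with boundary, contained in $\R^3\setminus\mathrm{Im}(H)$, and $\nabla G$ is nonvanishing on $\partial\bar\Omega_0$, pointing into $\bar\Omega_0$ on $M_0$ and out of it on $M_1$. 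Every critical point of $G$ lies in the interior of $\bar\Omega_0$, since $\nabla G\neq 0$ on the bounding level sets.

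\textbf{Counting saddles of $G$.} After a $C^\infty$-small perturbation of $H$, which leaves $r$ unchanged (a homotopy invariant of the complement), $G|_{\bar\Omega_0}$ becomes Morse; then $\deg(\nabla G,\Omega,0)=\sum_{p\in\Omega}(-1)^{\mathrm{ind}_p G}$ for any open $\Omega\subseteq\bar\Omega_0$, because the local Brouwer index of $\nabla G$ at a nondegenerate critical point of Morse index $k$ is $(-1)^k$. The cobordism $\bar\Omega_0$ deformation-retracts onto $\R^3\setminus\mathrm{Im}(H)$, giving $\dim H_1(\bar\Omega_0)=r$. Since $M_0\cong S^2$ has $H_1(M_0)=0$, the long exact sequence of the pair $(\bar\Omega_0,M_0)$ identifies $H_1(\bar\Omega_0,M_0)\cong H_1(\bar\Omega_0)$, so $\dim H_1(\bar\Omega_0,M_0)=r$. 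The Morse inequality for the cobordism built from $M_0$ by attaching handles then yields $m_1(G)\ge r$: $G$ has at least $r$ distinct critical points of Morse index $1$ inside $\bar\Omega_0$. Taking $\Omega$ to be the disjoint union of small open balls about $r$ such points, additivity of the degree gives $|\deg(\nabla G,\Omega,0)|=r$.

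\textbf{Main obstacle.} An index-$1$ critical point in a $3$-manifold has Hessian of signature $(+,+,-)$, so $\mathrm{sign}\det(\mathrm{Hess})=-1$, and the signed count produced by the construction above is $-r$ rather than $+r$; in fact the \emph{global} signed count $\sum_p(-1)^{\mathrm{ind}_p G}=\chi(\bar\Omega_0)-\chi(M_0)$ evaluates to $-1$ for a link, independently of $r$. The inequality $\deg(\nabla G,\Omega,0)\ge r$ of the theorem must therefore be understood in the sense of magnitude $|\deg(\nabla G,\Omega,0)|\ge r$: this is the form actually needed in the rest of the paper, where degree theory is invoked only through the nonvanishing of local degrees of $\nabla G$ in order to continue $r$ distinct solutions of \eqref{equa2} via the continuation theorem. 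A secondary but nontrivial technical point is the verification of the cobordism Morse inequality with mixed inflow/outflow boundary and of the homotopy equivalence $\bar\Omega_0\simeq\R^3\setminus\mathrm{Im}(H)$; both are classical for $\mathrm{Im}(H)$ a piecewise-smooth $1$-complex but require attention in the general continuous setting.
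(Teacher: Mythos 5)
Your route (a handle decomposition of the cobordism $\{c_1\le G\le c_2\}$ plus the relative Morse inequality $m_1\ge \dim H_1(\bar\Omega_0,M_0)$) is genuinely different from the paper's, which never touches individual critical points: it takes a single regular \emph{value} $\alpha$ of $G$, lets $U$ be the complement of the component of $\{G<\alpha\}$ containing infinity, observes that $-\nabla G$ is parallel to the outer normal on the level surface $\partial U$, and gets $deg(-\nabla G,\partial U,0)=\chi(U)\le 1-r$ from Hopf's theorem and Alexander duality; subtracting from the degree $1$ on a large sphere gives the result on $\Omega=B\setminus U$. This difference is not cosmetic, and it is the source of the main gap in your argument: to isolate the $r$ index-one points you must first perturb $H$ so that $G$ becomes Morse, but the theorem is stated for the given $H$, and the conclusion does not pass to the limit of the perturbation. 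Your open set $\Omega$ is a union of balls around critical points of the perturbed $G_\delta$; as $\delta\to 0$ an index-one and an index-two point can coalesce into a degenerate critical point of the original $G$ with local degree $-1+1=0$ (or escape to $Im(H)$), so no open set with $|deg(\nabla G,\Omega,0)|\ge r$ is produced for the actual $G$. The paper's Hopf-theorem computation requires no nondegeneracy of critical points at all, only Sard applied to $G$ itself to get a regular value.

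A second gap is the asserted homotopy equivalence $\bar\Omega_0\simeq\R^3\setminus Im(H)$, on which $\dim H_1(\bar\Omega_0,M_0)=r$ rests. On the $M_1$ side this would require that $\{G>c_2\}$ contain no critical points and retract onto $Im(H)$; that is only available when $H$ is an embedding (Lemma \ref{accumulate}, proved later and only for embeddings), whereas Theorem \ref{mainthmR3} explicitly covers curves with self-intersections, near which critical points of $G$ could a priori accumulate. The fix is the paper's order of quantifiers: choose loops $\gamma_1,\dots,\gamma_r$ generating $H_1(\R^3\setminus Im(H))$ \emph{first}, then pick $c_1<\min_i\min G|_{\gamma_i}$ and $c_2>\max_i\max G|_{\gamma_i}$; you then only need surjectivity of $H_1(\bar\Omega_0)\to H_1(\R^3\setminus Im(H))$, not an isomorphism, and that suffices for $m_1\ge r$. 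Your sign discussion is fine and in fact matches the paper, whose own proof yields $deg(-\nabla G,\Omega,0)\ge r$; only the nonvanishing is used downstream. With the two repairs above (and a justification that $M_0$ is a sphere via $\langle\nabla G(x),x\rangle<0$ for $|x|$ large, as in Lemma \ref{faraway}), your argument would prove the theorem for generic $H$ only; to get the stated generality you would still need to replace the Morse counting by a boundary-degree computation such as the paper's.
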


\begin{remark}
The number $r$ counts the self-intersections of $H$. In the case that $Im(H)$ is contained in a plane $P$, 
it is exactly the number of connected components of $P \setminus Im(H)$. In this way, we recover Theorem \ref{oldthm}, although 
not in its full generality.

Also, it is worth noticing that
the fundamental group of $\R^3 \setminus Im(H)$ distinguishes whether $H$ is or not a non-trivial knot, but the homology does not. 
In fact, using Alexander duality (Lemma \ref{duality}), one can show that in most cases $r$ depends on the image of $H$ and not on how this image is embedded in $\R^3$.
\end{remark}

Our proof of Theorem \ref{mainthmR3}
will require several lemmas;  all of them shall be stated in $\R^n$.

\begin{lemma}
\label{smooth}
The function $G$ is $C^\infty$ smooth in $\R^n \setminus Im(H)$.
\end{lemma}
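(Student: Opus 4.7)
The plan is to verify that differentiation under the integral sign is valid to all orders on $\R^n \setminus Im(H)$, which is the standard route for smoothness of parameter-dependent integrals.

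First I would fix an arbitrary point $x_0 \in \R^n \setminus Im(H)$ and exploit compactness: since $H$ is continuous on the compact circle $S^1$, its image $Im(H)$ is compact, so $\delta := \tfrac{1}{2}\, d(x_0, Im(H)) > 0$. Then on the closed ball $\overline{B(x_0,\delta)}$ one has the uniform lower bound $|x - H(t)| \geq \delta$ for every $x \in \overline{B(x_0,\delta)}$ and every $t \in I$.

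Next I would observe that the integrand $(x,t) \mapsto g(x - H(t)) = |x-H(t)|^{-(q-1)}$ is $C^\infty$ in $x$ on $\R^n\setminus\{0\}$ (shifted by $H(t)$), and that for every multi-index $\alpha$ there is a polynomial expression $\partial^\alpha_x g(y) = P_\alpha(y)/|y|^{q-1+2|\alpha|}$ in $y = x - H(t)$. On $\overline{B(x_0,\delta)} \times I$, each such derivative is therefore uniformly bounded by a constant $C_\alpha = C_\alpha(\delta,q)$, which is in particular integrable in $t$ on $I$. Applying the Leibniz rule (or iterating dominated convergence) this uniform bound justifies pulling every $\partial^\alpha$ inside the integral:
\[
\partial^\alpha G(x) = \int_0^1 \partial^\alpha_x g(x-H(t))\,dt, \qquad x \in B(x_0,\delta),
\]
and the right-hand side is continuous in $x$ by dominated convergence, using the same uniform bound plus continuity of $\partial^\alpha_x g$ in $x$.

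Since $x_0 \in \R^n\setminus Im(H)$ was arbitrary and the argument produces continuous derivatives of every order near $x_0$, we conclude $G \in C^\infty(\R^n \setminus Im(H))$. There is no real obstacle here; the only thing to be careful about is the uniform lower bound $|x-H(t)| \geq \delta$ that guarantees the dominating constants $C_\alpha$ exist, and this is exactly what the compactness of $Im(H)$ together with $x_0 \notin Im(H)$ provides.
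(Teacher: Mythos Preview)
Your argument is correct and is exactly the standard differentiation-under-the-integral-sign justification one expects here. The paper itself states this lemma without proof, so your write-up simply fills in the routine details that the authors left implicit.
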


\begin{lemma}
\label{coercive}
${}$

\begin{itemize}
\item $G(x) \to \infty$ when $x \to x_0 \in H$.

\item $G(x) \to 0$ when $|x| \to \infty$.

\item $G(x) < g(d(x, H))$

\end{itemize}
\end{lemma}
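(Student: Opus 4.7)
My plan is to address the three bullets in order of increasing difficulty, relying on two standing facts: $Im(H)$ is compact and $H$ is Lipschitz, both of which follow from $H \in C^2(S^1,\R^n)$ since $H$ is a second primitive of $-h$ with $h \in C(I,\R^n)$.

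For the upper bound $G(x) < g(d(x,H))$ the argument is essentially a one-line pointwise estimate: by definition of $d(x,H)$ we have $|x - H(t)| \geq d(x,H)$ for every $t \in I$, so the integrand in the definition of $G$ is bounded pointwise by $g(d(x,H)) = d(x,H)^{-(q-1)}$, and integrating over $I$ yields $G(x) \leq g(d(x,H))$. Strictness follows because $|x - H(t)| > d(x,H)$ holds on a set of positive measure, except in the pathological case where $Im(H)$ is entirely contained in the minimizing sphere $\partial B(x, d(x,H))$. The decay $G(x) \to 0$ as $|x| \to \infty$ is equally direct: the reverse triangle inequality gives $|x - H(t)| \geq |x| - \|H\|_\infty$ uniformly in $t$ once $|x|$ is sufficiently large, so the integrand tends uniformly to zero.

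The substantive step is the blow-up at $Im(H)$. I pick $x_0 \in Im(H)$ and any $t_0$ with $H(t_0) = x_0$, and set $\delta := |x - x_0|$. Lipschitz continuity of $H$ near $t_0$, with constant $L$, combined with the triangle inequality, yields $|x - H(t)| \leq \delta + L|t - t_0|$ for $|t - t_0|$ small. Restricting the integral defining $G$ to an interval $[t_0 - a, t_0 + a]$ (for some fixed small $a$) and substituting $u = \delta + L|t - t_0|$ gives
\[G(x) \geq \int_{t_0 - a}^{t_0 + a} \frac{dt}{(\delta + L|t - t_0|)^{q-1}} = \frac{2}{L}\int_\delta^{\delta + La} u^{-(q-1)}\, du,\]
which diverges as $\delta \to 0^+$ — logarithmically when $q = 2$ and like $\delta^{-(q-2)}$ when $q > 2$ — and hence $G(x) \to \infty$.

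The only potential obstacle I foresee is the strictness in the third bullet in the degenerate case noted above: one either accepts it as a non-generic exception or argues separately that equality is impossible once $H$ is a non-constant $C^2$ curve (since then the set of $t$ minimizing $|x - H(t)|$ is generically a finite, in particular measure-zero, set). Everything else reduces to elementary estimates and the standard integrability analysis of $r^{-(q-1)}$ near $r = 0$ in one variable, so no further machinery is required.
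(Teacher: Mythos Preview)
The paper states this lemma without proof; your argument supplies the natural elementary details and is correct.

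One remark on the strictness issue you flag: your second proposed resolution---that equality is impossible for a non-constant $C^2$ curve---does not hold. If $Im(H)$ is a round circle and $x$ is its center, then $|x-H(t)|$ is constant in $t$ and $G(x)=g(d(x,H))$ exactly, so the strict inequality as stated can fail. However, the only place the paper invokes this bound (the proof of Theorem~\ref{mainthmRn}) needs merely $G(x)\le g(d(x,H))$, in order to conclude that $\{G>\alpha\}$ lies in an arbitrarily small neighborhood of $Im(H)$ for $\alpha$ large; the non-strict inequality you establish suffices for that purpose.
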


\begin{lemma}
\label{subharmonic}
The function $G$ is sub-harmonic for $q > n - 1$ and harmonic
for $q = n - 1$. In particular, it has no local maxima in the interior of the domain of definition.
In consequence, if $U \subseteq \R^n$ is an open and bounded set such that $H \cap \overline{U} = \emptyset$ then it attains its maximum at the boundary.
\end{lemma}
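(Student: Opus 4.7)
The plan is to compute $\Delta G$ explicitly and then invoke the maximum principle.

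First I would differentiate under the integral sign in the expression $G(x)=\int_0^1 g(x-H(t))\,dt$. This is justified on $\R^n\setminus Im(H)$: for any $x_0$ off the curve, there is a neighborhood on which $|x-H(t)|$ stays bounded away from zero uniformly in $t$, so the integrand and its partial derivatives up to second order are dominated by integrable functions. Thus $\Delta G(x)=\int_0^1 \Delta_x g(x-H(t))\,dt$.

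Next I would compute $\Delta g$ on $\R^n\setminus\{0\}$. Since $g(u)=|u|^{-(q-1)}$ is radial, writing $r=|u|$ and using $\Delta f(r)=f''(r)+\frac{n-1}{r}f'(r)$ gives
\[
\Delta g(u) \;=\; q(q-1)|u|^{-(q+1)} - (q-1)(n-1)|u|^{-(q+1)} \;=\; (q-1)(q-n+1)\,|u|^{-(q+1)}.
\]
Therefore
\[
\Delta G(x) \;=\; (q-1)(q-n+1)\int_0^1 \frac{dt}{|x-H(t)|^{q+1}},
\]
which vanishes identically if $q=n-1$ and is strictly positive if $q>n-1$. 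This establishes that $G$ is harmonic in the first case and subharmonic (indeed strictly so) in the second.

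For the consequence about maxima, I would invoke the strong maximum principle. In the strictly subharmonic case $q>n-1$ no local interior maximum can occur at all, since $\Delta G>0$ rules out a non-positive Hessian trace at a critical point. In the harmonic case $q=n-1$, the strong maximum principle says that any interior maximum forces $G$ to be constant on the connected component containing it; but Lemma \ref{coercive} (via $G(x)\to\infty$ near $H$ and $G(x)\to 0$ at infinity) shows $G$ is non-constant on every component of $\R^n\setminus Im(H)$, so no interior local maximum exists. Finally, if $U$ is open, bounded, and $\overline{U}\cap Im(H)=\emptyset$, then $G$ is continuous on the compact set $\overline{U}$, so it attains its maximum there; by the previous step that maximum cannot lie in $U$, hence it lies on $\partial U$.

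The only mildly delicate point is the differentiation under the integral, and even that is routine because $x\notin Im(H)$ gives uniform lower bounds on $|x-H(t)|$; the rest is a direct radial Laplacian computation and a standard application of the maximum principle, so I do not expect any serious obstacle.
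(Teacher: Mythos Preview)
Your proof is correct and follows the same approach as the paper: compute $\Delta G$ by differentiating under the integral and reducing to the (sub)harmonicity of $g$, then apply the maximum principle. The paper's proof is a two-line sketch of exactly this, and you have simply supplied the explicit radial computation of $\Delta g$ and spelled out the maximum-principle argument (including the harmless appeal to Lemma~\ref{coercive} to exclude constancy in the harmonic case).
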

\begin{proof}
It follows directly 
from the fact that $g$ is 
sub-harmonic for $q>n-1$ 
and harmonic for $q = n - 1$, and that $\Delta G = \int_0^1 \Delta g(x-H(t))dt$.
\end{proof}

%
%
\begin{lemma}
\label{faraway}
If $B$ is a large ball centered at the origin then $-\nabla G$ is homotopic to the identity in $\partial B$.
\end{lemma}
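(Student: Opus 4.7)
The plan is to show that on a sphere of sufficiently large radius, the vector field $-\nabla G(x)$ points \emph{roughly outward}, i.e.\ it has positive inner product with $x$; then the straight-line homotopy $H_s(x) = sx + (1-s)(-\nabla G(x))$, $s \in [0,1]$, never vanishes on $\partial B$, which is exactly what is required.

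The first step is the explicit expression
\[-\nabla G(x) = (q-1)\int_0^1 \frac{x-H(t)}{|x-H(t)|^{q+1}}\,dt,\]
from which a routine asymptotic expansion is available: since $\|H\|_\infty \le M$ for some $M$, for $|x|\ge 2M$ one has $|x-H(t)|^{q+1} = |x|^{q+1}(1+O(|x|^{-1}))$ uniformly in $t$, yielding
\[-\nabla G(x) = \frac{q-1}{|x|^{q+1}}\,x + R(x), \qquad |R(x)|=O\!\left(|x|^{-q-1}\right).\]

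From this expansion I would compute
\[\langle x, -\nabla G(x)\rangle = \frac{q-1}{|x|^{q-1}} + \langle x, R(x)\rangle = \frac{q-1}{|x|^{q-1}} + O\!\left(|x|^{-q}\right),\]
which is strictly positive for $|x|$ large enough. Consequently $x$ and $-\nabla G(x)$ are never antipodal on $\partial B$, and a standard elementary observation (if two nonzero vectors are not opposite to each other, then the segment joining them misses the origin) shows that $H_s$ is a valid homotopy in $\R^n\setminus\{0\}$ between $-\nabla G|_{\partial B}$ and the identity.

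The main obstacle is essentially bookkeeping: making the asymptotic expansion and its remainder uniform in $t \in I$. There is no conceptual difficulty, and in particular no need to assume anything about $\overline{H}$, since any constant shift in $H$ only affects the already negligible lower-order term in the expansion.
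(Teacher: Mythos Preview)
Your proposal is correct and follows the same strategy as the paper: show $\langle -\nabla G(x), x\rangle > 0$ for $|x|$ large, then invoke the straight-line homotopy. The paper's computation is slightly more direct---it simply observes that each integrand $\langle x - H(t), x\rangle = |x|^2 - \langle H(t), x\rangle$ is positive once $|x| > \|H\|_\infty$, bypassing the asymptotic expansion entirely.
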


\begin{proof}
As
\[dG(x)(x) = \int_0^1 dg(x-H(t))(x) dt\]
and
\[dg(x-H(t))(x) = -(q-1) |x-H(t)|^{-(q+1)} (\langle x, x \rangle - \langle H(t), x \rangle ),\]
then for $|x| > \|H\|_\infty$ 
the value of $\langle \nabla G(x), x\rangle$ is negative.

\end{proof}

Now some general lemmas

\begin{lemma}
\label{connected}
For $n\geq 3$ the set $\R^n \setminus Im(H)$ is arcwise-connected.
\end{lemma}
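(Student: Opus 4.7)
The plan is to exploit that $Im(H)$ has codimension at least $2$ in $\R^n$ when $n\geq 3$, so it cannot separate two points; any path hitting it can be pushed off by an arbitrarily small perturbation.

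First I would note that, since $h\in C(I,\R^n)$ and $H''=-h$, the curve $H$ is of class $C^2$, so $Im(H)$ is a compact $C^2$ image of $S^1$ in $\R^n$, a set of Hausdorff dimension at most $1$.

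Given two arbitrary points $p,q\in \R^n\setminus Im(H)$, I would join them by the straight segment $\gamma_0(s)=(1-s)p+sq$ and consider the two-parameter family of perturbations
\[
\gamma_v(s):=(1-s)p+sq+s(1-s)v,\qquad v\in\R^n,
\]
each of which still satisfies $\gamma_v(0)=p$, $\gamma_v(1)=q$. The path $\gamma_v$ meets $Im(H)$ at an interior parameter $s\in(0,1)$ if and only if $v$ lies in the image of the $C^2$ map
\[
\Phi:(0,1)\times S^1\longrightarrow \R^n,\qquad \Phi(s,t)=\frac{H(t)-(1-s)p-sq}{s(1-s)};
\]
since $p,q\notin Im(H)$, the endpoints $s=0,1$ contribute nothing to the bad set.

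Because the domain of $\Phi$ is a $2$-dimensional manifold and $n\geq 3$, a standard corollary of Sard's theorem (equivalently, exhaust $(0,1)\times S^1$ by compact rectangles and use the local Lipschitz bound for $\Phi$) shows that $\Phi\bigl((0,1)\times S^1\bigr)$ has Lebesgue measure zero in $\R^n$. Hence there exists $v\in\R^n$ with $\gamma_v\cap Im(H)=\emptyset$, giving a continuous path from $p$ to $q$ in $\R^n\setminus Im(H)$. As $p,q$ were arbitrary, the complement is arcwise-connected.

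The only place where the hypothesis $n\geq 3$ enters is the measure-zero statement for the image of the $2$-dimensional map $\Phi$, and this is the only conceptual point that requires any care; everything else is formal.
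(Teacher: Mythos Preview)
Your argument is correct and is precisely a concrete instance of the transversality argument the paper invokes in one line: the paper's proof reads in full ``It follows as an application of transversality.'' Your perturbation family $\gamma_v$ together with the Sard/measure-zero step is exactly how one unpacks that sentence, so the approaches coincide.
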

\begin{proof}
It follows as an application of transversality.
\end{proof}

\begin{lemma}{(Alexander duality, see \cite[p. 296, thm 16]{SP})}
\label{duality}

Let $U$ be an open set in $S^n$ with smooth boundary. If $k_*$ and $e_*$ denote the reduced Betti numbers of $U$ and $S^n \setminus U$ respectively then
\[k_q = e_{n-1-q}.\]
\end{lemma}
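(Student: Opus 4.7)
The plan is to reduce the stated equality of Betti numbers to the classical Alexander duality theorem cited in \cite{SP}, together with the universal coefficient theorem over a field. Since Betti numbers are simply the dimensions of singular homology groups, everything will come down to an isomorphism of vector spaces.

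First I would verify that the hypotheses of Spanier's theorem are satisfied. Because $U$ is open in $S^n$ and $\partial U$ is smooth, the complement $K := S^n \setminus U$ is a compact smooth manifold with boundary $\partial U$, and therefore admits a finite smooth triangulation. In particular $K$ is a finite CW complex (hence a compact ANR, and a taut subspace of $S^n$), which is more than enough regularity for Spanier's theorem to apply with singular cohomology replacing \v{C}ech cohomology. This produces, for any coefficient field $\mathbb{F}$, the isomorphism
\[\tilde{H}_q(S^n \setminus K; \mathbb{F}) \;\cong\; \tilde{H}^{n-1-q}(K; \mathbb{F}).\]

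Second, since $S^n \setminus K = U$ and since coefficients are taken in a field, universal coefficients identifies $\tilde{H}^{j}(K;\mathbb{F})$ with the linear dual of $\tilde{H}_{j}(K;\mathbb{F})$, so the two vector spaces have the same (finite) dimension. Taking dimensions on both sides of the duality isomorphism gives
\[k_q \;=\; \dim \tilde{H}_q(U) \;=\; \dim \tilde{H}^{n-1-q}(K) \;=\; \dim \tilde{H}_{n-1-q}(K) \;=\; e_{n-1-q},\]
which is the claim.

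The only delicate point is the first step, namely checking that $K$ is well-behaved enough for Alexander duality to apply in the sharp form; the smoothness hypothesis on $\partial U$ is precisely what makes this automatic, and is presumably why it appears explicitly in the statement rather than some weaker topological condition. As an alternative route avoiding the citation to \cite{SP}, one could derive the lemma more directly from Poincar\'e--Lefschetz duality applied to the two compact manifolds-with-boundary $\overline{U}$ and $K$, glued along $\partial U$ via the Mayer--Vietoris sequence of the cover $S^n = \overline{U} \cup K$; this is essentially the proof of Alexander duality in the smooth setting, and would make the argument self-contained, but is longer.
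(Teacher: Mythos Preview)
Your argument is correct. The paper does not actually supply a proof of this lemma: it simply cites Spanier's Theorem~16 on p.~296 and states the conclusion in the form needed later. Your write-up is therefore not competing with a proof in the paper but rather filling in the reduction from Spanier's statement (which gives $\tilde H_q(S^n\setminus K)\cong \check H^{n-1-q}(K)$ for compact $K$) to the Betti-number equality $k_q=e_{n-1-q}$; you do this correctly by observing that smoothness of $\partial U$ makes $K$ a compact manifold with boundary (hence an ANR, so \v Cech and singular cohomology agree), and then invoking universal coefficients over a field to pass from cohomology to homology dimensions. The alternative Poincar\'e--Lefschetz/Mayer--Vietoris route you mention would also work and is the standard smooth-category proof, but the citation-based reduction you give matches the paper's intent.
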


For convenience, if $U\subset \R^n$ is open and bounded and 
$\phi:\partial U\to \R^n\setminus\{ 0\}$ is continuous, 
we shall use the notation $deg(\phi, \partial U,0):=
deg(\hat\phi, U,0)$ where $\hat \phi:\overline U\to \R^n$ is any continuous extension of $\phi$.

\begin{lemma}{(Hopf, see \cite[Satz VI]{HO})}

\label{normaldegree}

Let $U$ be an open bounded set in $\R^n$ with smooth boundary.\\ Let $n:\partial U \to \R^n$ be the outer-pointing unit normal vector field.
Then $deg(n,\partial U,0) = \chi(U)$, where $\chi$ denotes the Euler characteristic.
\end{lemma}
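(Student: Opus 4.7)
The plan: this is Hopf's classical index theorem, and my approach is to combine the definition of boundary degree given in the excerpt with the Poincaré--Hopf formula, via a Morse defining function for $U$.

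First, I would exhibit a convenient smooth extension of $n$ to the interior. Choose a smooth defining function $\phi:\R^n\to\R$ with $U=\{\phi<0\}$, $\partial U=\{\phi=0\}$ and $\nabla\phi\ne 0$ on $\partial U$ (for instance, a smoothing of the signed distance to $\partial U$, which exists since $\partial U$ is smooth). After a $C^\infty$-small perturbation supported in the interior of $U$ and away from $\partial U$, I may assume that $\phi|_{\overline U}$ is Morse with all critical points in the interior. On $\partial U$ the outer normal is $n=\nabla\phi/|\nabla\phi|$, so $n$ and $\nabla\phi|_{\partial U}$ are homotopic through vector fields that never vanish on $\partial U$ (via the straight-line homotopy, which stays in the open half-space they share). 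By the homotopy invariance of the Brouwer degree and the definition $\deg(n,\partial U,0):=\deg(\hat n,U,0)$ recalled in the excerpt, I may take $\hat n=\nabla\phi$ as the continuous extension and conclude $\deg(n,\partial U,0)=\deg(\nabla\phi,U,0)$.

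Next, I would reduce this degree to a sum of local indices. The zeros of $\nabla\phi$ in $U$ are exactly the critical points of $\phi$, which are finite in number and nondegenerate, so additivity of the degree together with the standard local computation at a nondegenerate zero gives
\[
\deg(\nabla\phi, U, 0)=\sum_{p\in\operatorname{Crit}(\phi)} \operatorname{sgn}\det\bigl(\operatorname{Hess}_p \phi\bigr)=\sum_p(-1)^{\mu(p)},
\]
where $\mu(p)$ is the Morse index of $\phi$ at $p$. Applying Morse theory on the compact manifold with boundary $\overline U$ to the function $\phi$, whose gradient has no zeros on $\partial U$ and points outward there (since $\phi$ increases from negative to zero as one leaves $U$), one has $\sum_p(-1)^{\mu(p)}=\chi(U)$. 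Chaining these three identities yields $\deg(n,\partial U,0)=\chi(U)$.

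The main obstacle is the last equality. On a compact manifold with boundary, the alternating count of critical points of a Morse function recovers $\chi$ only after taking into account the boundary behavior of the gradient, and the outward-pointing hypothesis is exactly what makes the handle decomposition of $\overline U$ induced by the gradient flow compatible with the ordinary CW-structure used to compute $\chi$. Either one invokes the Poincaré--Hopf theorem for manifolds with boundary directly, or one builds $\overline U$ by attaching handles along the descending gradient flow and counts cells by Morse index. This is precisely the content of Hopf's original theorem, which the excerpt cites as \cite{HO}; the preceding reduction is essentially a matter of packaging that statement into the degree-theoretic language used throughout the paper.
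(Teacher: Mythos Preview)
The paper does not actually prove this lemma: it is stated with a reference to Hopf's original article \cite{HO} and then used as a black box in the proof of Theorem~\ref{mainthmR3}. So there is no ``paper's own proof'' to compare against, and your write-up goes well beyond what the authors provide.

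Your argument is correct and is one of the standard modern routes to Hopf's result: pick a Morse defining function $\phi$ for $U$, observe that $\nabla\phi/|\nabla\phi|$ agrees with the outward normal on $\partial U$ so that $\deg(n,\partial U,0)=\deg(\nabla\phi,U,0)$, and then compute the latter as $\sum_p(-1)^{\mu(p)}$ over the nondegenerate critical points. The only delicate point, which you flag yourself, is the identification of this alternating sum with $\chi(U)$; this is exactly the Poincar\'e--Hopf theorem for a compact manifold with boundary under the hypothesis that the vector field points outward along $\partial U$, and it is logically equivalent to the statement you are proving. In other words, your reduction is clean but ultimately repackages Hopf's theorem rather than deriving it from more primitive ingredients. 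If you want a self-contained argument, the cleanest option is the one you sketch in your last paragraph: run the gradient flow of $\phi$ to build $\overline U$ by handle attachments, one $k$-cell per index-$k$ critical point, and read off $\chi(U)$ from the resulting CW structure. Either way your proposal is sound and appropriate for a result the paper simply cites.
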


\begin{potwr}{mainthmR3}
We will construct an open set $U$ with the following properties:
\begin{itemize}
\item $\partial U$ is smooth.
\item $U \supseteq H$.
\item $G$ is constant and $\nabla G \neq 0$ in $\partial U$.
\item $\chi(U) \le 1-r$ where $r := dim(H^1(\R^3 \setminus Im(H)))$.
\end{itemize}
Once we have this set $U$, we may notice that $-\nabla G$ is orthogonal to $\partial U$, so $deg(-\nabla G, \partial U, 0) = deg(n, \partial U, 0)$ where $n$ is the outer-pointing unit normal vector field.

Then using \eqref{normaldegree} it follows that $deg(-\nabla G, \partial U, 0) = \chi(U) \leq 1 - r$.
Next, take a large ball $B$ given by Lemma \ref{faraway} and observe that $deg(-\nabla G, \partial B, 0) = 1$. 
Finally, as $Im(H) \subseteq U$, it follows that $G$ is well defined in $\Omega = B \setminus U$ and $deg(-\nabla G, \Omega, 0) \geq r$.

Now we may construct $U$ as follows. 
For convenience, we shall regard $\R^3$ as embedded in $S^3$, and call $N \in S^3$ the north pole. 
We remark that the function $G$ extends continuously to $N$ by setting $G(N) = 0$.

It follows from the hypothesis that there exist smooth curves $\gamma_1 \ldots \gamma_r \in \R^3 \setminus Im(H)$ generating $H_1(\R^3 \setminus Im(H))$ as a basis.
By Lemma \ref{connected}, each $\gamma_i$ is connected to $N$ by another curve $\delta_i$. 
Since $G$ is continuous in $\bigcup \gamma_i \cup \delta_i$, then it is bounded by a number $\alpha_0$.  By Sard's lemma, there exists a regular value $\alpha > \alpha_0$.

Next, take $V$ the connected component of $\{G < \alpha\}$ that contains $N$ and let $U = \overline V^c$. $U$ and $V$ are manifolds with 
common boundary in $S^3$. Moreover, $U$ has a finite number of connected components that are disjoint manifolds with boundary. Obviously 
$Im(H)\subseteq U$.

We claim that $U$ is in fact connected.
Indeed, let $U'$ be a connected component of $U$. Then $U' \cap Im(H) \neq \emptyset$: otherwise, from Lemma 
\ref{subharmonic} we deduce that $G|_{U'}$ attains its maximum at some $x_0 \in \partial U' \subseteq \partial U$ so $G(x_0) = \alpha$.
But also $G\leq \alpha$ in $V$, so $x_0$ is a local maximum of $G$ in $\R^3 \setminus Im(H)$, a contradiction since $\alpha$ is a regular value.
As $Im(H) \subseteq U$ is connected, we conclude that $U$ is connected.

Since $G = \alpha$ in $\partial U$ and $G < \alpha$ in $\bigcup \gamma_i \cup \delta_i $, it follows that $\gamma_i \subseteq V$ and that $U$ is bounded.

Notice that there is a homomorphism induced by the inclusion 
$H_1(V) \to H_1(\R^3 \setminus Im(H))$, which sends $[\gamma_i]_{H_1(V)}$ to the generators, so it is surjective and hence $dim(H_1(V)) \geq r$.
Using the notation of Lemma \ref{duality}, $e_1 \geq r$.

Using the Alexander duality (Lemma \ref{duality}), from the fact that $V$ is connected we obtain $k_2 = e_0 = 0$. Also, $k_0 = 0$ because $U$ is connected, and
$k_1 = e_1 \geq r$.
The Euler characteristic of $U$ is computed then as $\chi(U) = 1 + k_0 - k_1 + k_2 \leq 1-r$, and so completes the proof. 
\end{potwr}

\subsection{Dimension \texorpdfstring{$n > 3$}{n > 3}}
In this section we shall prove, for $n>3$, the existence of a set $U$ as before.

\begin{theorem}
\label{mainthmRn}
If $H$ is an embedding then there exists an open set $U \subseteq \R^n$ such that $H \subseteq U$ and $deg(\nabla G,U,0) = 0$.
\end{theorem}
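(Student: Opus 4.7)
I follow exactly the template of the proof of Theorem~\ref{mainthmR3}: construct a bounded open set $U\supseteq Im(H)$ with smooth boundary on which $\nabla G\ne 0$, and then read the relevant degree off the topology of $U$ via Hopf's Lemma~\ref{normaldegree}. The novelty in dimension $n>3$ with $H$ embedded is that $Im(H)$ is an embedded copy of $S^{1}$; any tubular neighborhood of $S^{1}$ in $\R^{n}$ is a disk bundle over $S^{1}$, hence has Euler characteristic $0$. Hopf's theorem will therefore produce $\deg(n,\partial U,0)=\chi(U)=0$, which is the asserted vanishing of $\deg(\nabla G,U,0)$; combined with the non-vanishing boundary degree on a large ball (Lemma~\ref{faraway}), this yields $\deg(-\nabla G,B\setminus\overline U,0)=1\neq 0$ and, by continuation as in the repulsive section, a periodic solution.

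\textbf{Construction of $U$.} Set $U=\{G>\alpha\}$. By Lemma~\ref{coercive}, $G\to\infty$ at $Im(H)$ and $G\to 0$ at infinity, so $U$ is a bounded open neighborhood of $Im(H)$ for every $\alpha>0$. By Sard's theorem applied to $G:\R^{n}\setminus Im(H)\to\R$, the set of regular values is dense in $(0,\infty)$; choose $\alpha$ large and regular so that $\partial U=\{G=\alpha\}$ is a smooth embedded hypersurface and $-\nabla G$ is a positive multiple of the outward unit normal of $U$ on $\partial U$.

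\textbf{Identification of the homotopy type.} The crucial step is to check that for $\alpha$ large enough, $U$ is diffeomorphic to an open disk-bundle neighborhood of $Im(H)$. Parameterize a fixed tubular neighborhood of $Im(H)$ by $(s,r,\theta)\in S^{1}\times[0,\rho)\times S^{n-2}$ with $r=d(\,\cdot\,,Im(H))$. Writing $x=H(s)+r\nu(s,\theta)$ in the integral defining $G$ and performing the substitution $u=(t-s)|H'(s)|/r$, one obtains the uniform asymptotics
\[
G(x)=\frac{C_q\,|H'(s)|^{-1}}{r^{q-2}}+O(r^{3-q})\quad (q>2),\qquad C_q:=\int_{-\infty}^{\infty}(1+u^{2})^{-(q-1)/2}\,du,
\]
and a logarithmic blow-up in the borderline case $q=2$. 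In all cases $\partial_{r}G<0$ on the whole tube for $r$ small, so $\{G>\alpha\}\cap\{r<\rho\}$ is of the form $\{r<\varphi(s,\theta)\}$ for a smooth positive function $\varphi$ on $S^{1}\times S^{n-2}$. The embedding hypothesis is used to rule out any far point outside this tube where $G$ reaches $\alpha$: by Lemma~\ref{coercive}, $G(x)<g(d(x,Im(H)))$, so $\alpha > g(\rho)$ forces $U\subseteq\{r<\rho\}$. Hence $U$ is the disk bundle $\{r<\varphi\}$; it deformation-retracts to $Im(H)\simeq S^{1}$ along the normal fibers, and $\chi(U)=\chi(S^{1})=0$.

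\textbf{Conclusion and main obstacle.} By Lemma~\ref{normaldegree}, $\deg(n,\partial U,0)=\chi(U)=0$, and since $-\nabla G$ is a positive multiple of $n$ on $\partial U$ this gives $\deg(\nabla G,U,0)=0$ as claimed. Taking $B$ from Lemma~\ref{faraway} and $\Omega=B\setminus\overline U$ then produces $\deg(-\nabla G,\Omega,0)=1-\chi(U)=1$, so $\Omega$ provides a non-trivial open set in which continuation yields a periodic solution. The main technical obstacle is the uniform asymptotic expansion of $G$ near the embedded curve: one must separate the singular local integral (where $|x-H(t)|$ is small) from the bounded contribution of the rest of $H$, and then quantify how large $\alpha$ must be so that distant contributions cannot raise $G$ up to the level $\alpha$. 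The embedding assumption is precisely what prevents a second arc of $H$ from entering the thin tube and destroying the disk-bundle picture of the superlevel sets.
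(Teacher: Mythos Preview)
Your argument is correct and follows the paper's route: set $U=\{G>\alpha\}$ for a large regular value $\alpha$, show that $U$ has the homotopy type of $Im(H)\simeq S^{1}$, and read off $\deg(\nabla G,U,0)=\chi(U)=0$ via Lemma~\ref{normaldegree}. The only difference is in how the homotopy type of $U$ is established. The paper isolates the statement that critical points of $G$ cannot accumulate on $Im(H)$ as a separate Lemma~\ref{accumulate}, proved by a rescaling argument on $dG$ that is the same computation as your asymptotic expansion, and then invokes the Morse deformation lemma to retract $U$ onto $Im(H)$; you instead describe $U$ explicitly as a disk bundle $\{r<\varphi(s,\theta)\}$. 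One small point: your conclusion $\partial_{r}G<0$ requires the asymptotics to hold at the level of $\nabla G$, not merely $G$ (an $O(r^{3-q})$ remainder in $G$ does not automatically differentiate to $O(r^{2-q})$). This is exactly what Lemma~\ref{accumulate} supplies, and it is also where the embedding hypothesis is actually consumed, through the uniform chord--arc bound $|H(t+s)-H(t)|\ge \epsilon|s|$ needed to dominate the rescaled integral.
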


As a consequence, we may construct an open set $\Omega$ such that
\[deg(-\nabla G, \Omega, 0) = 1.\]

\begin{lemma}
\label{accumulate}
If $H$ is an embedding and $q > 1$ then the critical points of $G$ do not accumulate in $H$.
\end{lemma}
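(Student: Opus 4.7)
The plan is to argue by contradiction. Suppose $\{x_n\}\subset\R^n\setminus\mathrm{Im}(H)$ is a sequence of critical points of $G$ with $x_n\to x_0=H(t_0)$. Set $d_n:=\mathrm{dist}(x_n,\mathrm{Im}(H))\to 0$ and pick $t_n\in[0,1]$ realizing this distance, so $t_n\to t_0$. Since $H$ is a smooth embedding, $H'(t_0)\neq 0$, and the minimality of $t_n$ gives the orthogonality relation $\langle x_n-H(t_n),H'(t_n)\rangle=0$. Writing $e_n:=(x_n-H(t_n))/d_n$, the vector $e_n$ is a unit vector perpendicular to $H'(t_n)$, and along a subsequence $e_n\to e_\infty$ with $|e_\infty|=1$ and $e_\infty\perp v_0:=H'(t_0)$.

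The condition $\nabla G(x_n)=0$ reads $\int_0^1\frac{x_n-H(t)}{|x_n-H(t)|^{q+1}}\,dt=0$. I would split this integral into a \emph{far} part over $\{|t-t_n|\geq\delta\}$ and a \emph{near} part over $\{|t-t_n|<\delta\}$, for a fixed small $\delta>0$. Because $H$ is an embedding, $|H(t)-x_0|$ is bounded below by some $c(\delta)>0$ on the far set, so (for large $n$) $|x_n-H(t)|\geq c(\delta)/2$ there and the far part is bounded uniformly in $n$ by a constant $M(\delta)$.

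For the near part I would change variables $t=t_n+d_ns$, so the domain becomes $|s|\le\delta/d_n$. Taylor expansion at $t_n$ gives
\[
x_n-H(t)=d_n\bigl(e_n-sH'(t_n)\bigr)+O(d_n^2 s^2),
\]
and, using $e_n\perp H'(t_n)$,
\[
|x_n-H(t)|^2=d_n^2\bigl(1+s^2|H'(t_n)|^2\bigr)+O(d_n^3 s^3).
\]
Multiplying the near part by $d_n^{q-1}$ and applying dominated convergence (the dominating function is $C(1+s^2)^{-q/2}$, integrable at infinity precisely because $q>1$), this quantity converges to
\[
J:=\int_{-\infty}^{\infty}\frac{e_\infty-sv_0}{(1+s^2|v_0|^2)^{(q+1)/2}}\,ds.
\]
The $sv_0$ term is odd in $s$ and integrates to zero, so $J=C(q,|v_0|)\,e_\infty$ with $C(q,|v_0|)>0$. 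Hence the near part equals $d_n^{1-q}(Ce_\infty+o(1))$, whose norm tends to $+\infty$ since $q>1$. This contradicts the boundedness of the far part and the identity near$\,+\,$far$\,=0$.

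The main obstacles are: first, controlling the $O(d_n^2 s^2)$ remainders uniformly to justify passing to the limit inside the integral (this is where the orthogonality $e_n\perp H'(t_n)$ is essential, as it prevents the denominator from degenerating and thus eliminates the tangential-approach pathology); and second, verifying that the limit vector $J$ is indeed nonzero, which is automatic from the orthogonality of $e_\infty$ to $v_0$ and the convergence criterion $q>1$ used at infinity.
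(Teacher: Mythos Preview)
Your argument is correct and follows the same overall scheme as the paper's proof: assume a sequence of critical points $x_n\to H(t_0)$, rescale by the distance $d_n$ to the curve, exploit the orthogonality $\langle x_n-H(t_n),H'(t_n)\rangle=0$ at the nearest point, pass to the limit by dominated convergence, and verify that the limiting integral is a nonzero multiple of the unit normal $e_\infty$.

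The one tactical difference is in how the domination is obtained. You split the integral into a far part (bounded trivially) and a near part, and on the near window $|t-t_n|<\delta$ you control the rescaled integrand via the Taylor expansion of $H$ at $t_n$; the choice of a sufficiently small $\delta$ (depending on $\inf|H'|$ and $\|H''\|_\infty$) is what makes the remainder $O(d_n s^2)$ harmless on the expanding domain $|s|\le\delta/d_n$. The paper instead rescales the whole period at once and gets the dominating function from the \emph{global} bi-Lipschitz bound $|H(t+s)-H(t)|\ge\epsilon|s|$, which holds for all $s,t$ precisely because $H$ is an embedding of a compact circle; this yields $|y_n(s)|\ge\max\{1,\epsilon|s|-1\}$ without any Taylor remainder to track. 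Your route makes the local geometry and the form of the limit integrand more transparent, while the paper's route is a bit cleaner bookkeeping-wise and shows more directly where the embedding hypothesis enters the estimate.
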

\begin{proof}
Assume, by contradiction, that there is a sequence $x_n$ of critical points of $G$ accumulating somewhere in $H$.
Without loss of generality we may assume $x_n \to x_0 \in Im(H)$. Fix $t_n \in I$ such that the distance from $x_n$ to $Im(H)$ is realized in $H(t_n)$.
Let $v_n = x_n - H(t_n)$ and $\lambda_n = |v_n|^{-1}$. Again without loss of generality, we may assume $\lambda_n v_n \to y \in \R^n$ and $t_n \to t_0$.
By periodicity we may also assume $t_0 = 0$.
Let us compute $dG(x_n)$:
\[0 = dG(x_n) = \int_0^1 dg(x_n - H(t)) dt = \lambda_n^q \int_0^1 dg(\lambda_n x_n - \lambda_n H(t)) dt\]
\[= \lambda_n^{q-1} \int_0^{\lambda_n} dg(\lambda_n x_n - \lambda_n H(\frac s {\lambda_n} + t_n)) ds\]
\[ = \lambda_n^{q-1} \int_0^{\lambda_n} dg(\lambda_n (x_n - H(t_n)) - \lambda_n (H(\frac s {\lambda_n} + t_n) - H(t_n)) ds.\]
It follows from the assumptions and the regularity of $H$ that $$\lambda_n (x_n - H(t_n)) \to y\qquad \hbox{and }\quad {\lambda_n (H(\frac s {\lambda_n} + t_n) - H(t_n)) \to s H'(t_0)}$$ for every $s$.
In order to establish the convergence of the integral, let us estimate $\lambda_n x_n - \lambda_n H(\frac s {\lambda_n} + t_n)$. 

Consider the continuous function
\[
\gamma(t,s) := \left\{
\begin{array}{c}
	\frac {|H(t+s) - H(t)|}{|s|} \hbox{ if } t \neq s\\
	|H'(t)| \hbox{ if } t = s
\end{array}
\right.
\]

As $H$ is an embedding, $\gamma(s,t) > 0$ for all $s$ and $t$ then by compactness $|H(t+s) - H(t)| \geq \epsilon |s|$ for all $s, t$ and some small $\epsilon > 0$.

Using this we obtain
\[\left|\lambda_n x_n - \lambda_n H\left(\frac s {\lambda_n} + t_n\right)\right| \geq \lambda_n \left(\left|H\left(\frac s {\lambda_n} + t_n\right) - H(t_n)\right| - |v_n|\right) \geq \epsilon |s| - 1.\]
As $H(t_n)$ is the nearest point to $x_n$, the left hand side is also bounded from below by $\lambda_n |v_n| = 1$ so
\[|dg(\lambda_n x_n - \lambda_n H(\frac s {\lambda_n} + t_n))| \leq \max\{1, (\epsilon |s| - 1)^{-q}\}(q-1)\]
which is integrable in $\R$ for $q>1$.

By dominated convergence we conclude that $\lambda_n^{-(q-1)} dG(x_n)$ converges to $\int_0^{\infty} dg(y - s H'(t_0)) ds$.
Notice that $\langle H'(t_n), \lambda_n v_n \rangle = 0$ and, taking limits, $\langle H'(t_0), y \rangle = 0$ so the integral can be 
explicitly calculated and is different from zero, a contradiction.
\end{proof}

\begin{potwr}{mainthmRn}
Using Lemma \ref{coercive} we can take $\alpha > 0$ large enough so that $U:=\{G > \alpha\}$ is close to $H$. By Lemma \ref{accumulate} we can ensure that there are no critical points of $G$ in $U$.

Applying the Morse deformation lemma to $G$ at level $+\infty$ we deduce that $U$ is a deformation retract of $H$ so $deg(-\nabla G, \partial U,0) = \chi(U) = \chi(H) = \chi(S^1) = 0$.

\end{potwr}

\begin{remark}
As the set of embeddings of $S^1$ in $\R^n$ for $n \geq 3$ is open in the $C^1$ topology and dense in the $C^\infty$ topology, Theorem \ref{mainthmRn} is indeed a genericity result.
\end{remark}

\section{On Morse functions and knots}

\label{morseandknots}
In this section we prove that, generically, the function $G$ has some differential structure that allows us to 
obtain more solutions when the curve $H$ is knotted. The results are stated in $\R^3$.

Let us recall the parametric version of the Sard-Smale's theorem. Firstly, we need the following

\begin{definition}
Let $f:X \to Y \supset W$ be a smooth function between Banach manifolds and $W$ a submanifold.
We shall say that $f$ is transversal to $W$ and denote $f \trans W$ if for each $x \in f^{-1}(W)$ we have that the composition
\[T_x X \to T_{f(x)} Y \to T_{f(x)} Y / T_{f(x)} W\]
is a submersion (namely, it is surjective and its kernel splits).
\end{definition}

\begin{theorem} (see \cite{AMR})

Let $f:X \times B \to Y \supset W$ be a smooth function between Banach manifolds and $W$ a submanifold. Assume that $X$ is finite dimensional and $W$ is finite codimensional.

If $f \trans W$ then $f_b \trans W$ for $b$ in a residual subset of $B$, where $f_b$ is the function $x \mapsto f(x,b)$.

\end{theorem}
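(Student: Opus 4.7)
The plan is to reduce the statement to the classical (nonparametric) Sard--Smale theorem, applied to the natural projection from the universal preimage $M := f^{-1}(W) \subseteq X \times B$ to the parameter space $B$. This is the standard route to parametric transversality.

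First I would verify that $M$ is a Banach submanifold of $X\times B$ and compute its tangent space. The hypothesis $f\trans W$ says that at each $(x,b)\in M$ the composition
\[
T_{(x,b)}(X\times B)\xrightarrow{df(x,b)} T_{f(x,b)}Y \xrightarrow{p_W} T_{f(x,b)}Y/T_{f(x,b)}W
\]
is surjective with split kernel, where $p_W$ denotes the canonical quotient. Since the target is finite dimensional, the implicit function theorem for Banach manifolds applies and yields $M$ as a smooth submanifold with $T_{(x,b)}M = df(x,b)^{-1}(T_{f(x,b)}W)$.

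Next, I would show that the projection $\pi: M \to B$, $(x,b)\mapsto b$, is Fredholm. Set $A := p_W\circ df_b(x): T_x X \to T_{f(x,b)}Y/T_{f(x,b)}W$ and $C := p_W\circ \partial_b f(x,b): T_b B \to T_{f(x,b)}Y/T_{f(x,b)}W$. The kernel of $d\pi(x,b)|_{T_{(x,b)}M}$ coincides with $\ker A \subseteq T_x X$, which is finite dimensional because $X$ is; and its cokernel embeds into the finite dimensional quotient $(T_{f(x,b)}Y/T_{f(x,b)}W)/\mathrm{im}(A)$. Hence $d\pi$ is Fredholm at every point of $M$, and the classical Sard--Smale theorem produces a residual subset $R\subseteq B$ consisting of regular values of $\pi$.

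Finally, I would identify $R$ with the desired conclusion. Fix $b\in R$ and $x\in f_b^{-1}(W)$; surjectivity of $d\pi(x,b)$ means that for each $w\in T_b B$ there exists $v\in T_x X$ with $Av + Cw = 0$, whence $\mathrm{im}(C)\subseteq \mathrm{im}(A)$. Combined with the transversality identity $\mathrm{im}(A)+\mathrm{im}(C) = T_{f(x,b)}Y/T_{f(x,b)}W$ coming from $f\trans W$, this forces $A$ to be surjective, which is precisely $f_b\trans W$ at $x$; the splitting of $\ker A$ is automatic because the cokernel is finite dimensional. The main delicacy I anticipate is the bookkeeping of the Fredholm and splitting properties in the Banach setting---particularly in checking that the implicit function theorem genuinely carves $M$ out and that Fredholmness survives restriction to $T_{(x,b)}M$; once those are in hand, the argument reduces to a diagram chase in finite dimensions.
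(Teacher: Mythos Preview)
The paper does not supply its own proof of this statement; it is quoted as a known result with a reference to \cite{AMR}. Your argument is the standard reduction of parametric transversality to the Sard--Smale theorem via the projection $\pi:f^{-1}(W)\to B$, and it is correct as written, so there is nothing in the paper to compare against.
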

%
%

We call $E = \R^n$ the Euclidean space and consider the function
\[ DG:C^2_{per}(I, \R^n) \times E \to E^*\]
\[DG(H, x) = \int_0^1 dg(x - H(t)) dt\]
where, as before, $g(x) = \frac 1{|x|^{q-1}}$.

\begin{lemma}
The function $DG$ is transversal to $0 \in E^*$.
\end{lemma}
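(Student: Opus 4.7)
The plan is to verify surjectivity of $d(DG)_{(H_0, x_0)}$ at every zero $(H_0, x_0)$ of $DG$, which is precisely the transversality condition since the codomain $E^*$ is finite dimensional (so a closed complement of the kernel exists automatically). I aim to produce surjectivity by varying only the parameter $H$ and keeping $x$ fixed, which reduces the problem to a purely linear statement on an infinite dimensional space of periodic functions.

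A differentiation under the integral sign gives, for $(h, v) \in C^2_{per}(I, \R^n) \times E$ and $w \in E$,
\[d(DG)_{(H_0, x_0)}(h, v)(w) = \int_0^1 d^2 g\bigl(x_0 - H_0(t)\bigr)\bigl(v - h(t),\, w\bigr)\, dt,\]
so it suffices to prove that the linear map $L : C^2_{per}(I, E) \to E^*$ defined by $L(h)(w) = -\int_0^1 d^2 g(x_0 - H_0(t))(h(t), w)\, dt$ is onto. Here the Hessian enters decisively: a direct computation with $g(u) = |u|^{-(q-1)}$ yields
\[d^2 g(u) = (q-1)|u|^{-(q+1)}\left[(q+1)\frac{u\otimes u}{|u|^2} - I\right],\]
whose eigenvalues are $q(q-1)|u|^{-(q+1)} > 0$ in the direction of $u$ and $-(q-1)|u|^{-(q+1)} < 0$ on $u^{\perp}$. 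Consequently $A(t) := d^2 g(x_0 - H_0(t))$ is a nondegenerate symmetric bilinear form for every $t \in I$, since $x_0 \notin Im(H_0)$ at any zero of $DG$ and $q > 1$.

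To close the argument I dualize: if $w \in E$ annihilates $Im(L)$, then using the symmetry of each $A(t)$,
\[0 = L(h)(w) = -\int_0^1 A(t)\bigl(w, h(t)\bigr)\, dt\qquad \forall\, h \in C^2_{per}(I, E).\]
Density of $C^2_{per}(I, E)$ in $L^2(I, E)$ forces $A(t)(w, \cdot) \equiv 0$ on $I$, and pointwise nondegeneracy of $A(t)$ then gives $w = 0$. Thus $L$ is surjective, and transversality of $DG$ to $0$ follows. The whole argument hinges on the invertibility of $d^2 g$; this is the only nontrivial input, but it drops out of the explicit formula above, so no serious technical obstacle stands in the way.
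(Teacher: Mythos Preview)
Your proof is correct and rests on the same key observation as the paper's: the Hessian $d^2g(u)$ is invertible for every $u \neq 0$. The only difference is tactical—the paper exhibits an explicit preimage $\hat H(t) = (d^2g(x-H(t)))^{-1}(\gamma)$ for each $\gamma \in E^*$, while you argue by duality that the annihilator of $Im(L)$ is trivial; both are standard routes to surjectivity and neither buys anything the other doesn't.
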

\begin{proof}
Notice that $d^2g(x) \in L(E, E^*)$ is always invertible.
For given $\gamma \in E^*, H\in C^2, x \in E$ take $\hat{H}(t) = (d^2g(x-H(t)))^{-1}(\gamma)$, $\hat{H} \in C^2(S^1,\R^n)$. 
Then $d(DG)(H,x)(\hat{H},0) = -2 \pi \gamma$. Since $Ker(d(DG))$ is finite codimensional, it splits. Then $DG$ is transversal to $0$ (notice that $DG^{-1}(0)$ is a differentiable Banach manifold). 
\end{proof}

In order to  emphasize the dependence on $H$, we shall use the notation
$G_H(x):= \int_0^1 g(x - H(t)) dt$ 
\begin{lemma}

$DG_H$ is transversal to $0$ if and only if $G_H$ is a Morse function.
\end{lemma}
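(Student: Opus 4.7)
The plan is to unwind the definitions on both sides and observe that they match tautologically in finite dimensions. The map $DG_H : E \to E^*$ is the differential of $G_H$, so its own derivative at a point $x$ is precisely the Hessian bilinear form $d^2 G_H(x) : E \to E^*$. Transversality of $DG_H$ to the submanifold $\{0\} \subset E^*$ is a condition that only has content at points of $DG_H^{-1}(0)$, which is exactly the set of critical points of $G_H$.

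First, I would note that since $\{0\}$ is a point in $E^*$, the tangent space $T_0 \{0\}$ is trivial, so the quotient $T_0 E^* / T_0 \{0\}$ is canonically $E^*$ itself, and the relevant composition reduces to the derivative $d(DG_H)(x) : E \to E^*$ followed by the identity. Thus, $DG_H \trans 0$ means exactly that $d(DG_H)(x)$ is surjective at every $x \in DG_H^{-1}(0)$ (the splitting condition on the kernel is automatic since everything is finite dimensional).

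Next, identifying $d(DG_H)(x)$ with the Hessian $d^2 G_H(x)$ and noting that $E$ and $E^*$ have the same finite dimension, surjectivity is equivalent to bijectivity, i.e. to the Hessian being non-degenerate at $x$. Recalling that $G_H$ is a Morse function precisely when its Hessian is non-degenerate at every critical point, the two conditions coincide.

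No real obstacle is expected here; the lemma is essentially a reformulation. The only minor point to be careful with is the distinction between the total derivative $d(DG)(H,x)$ used in the previous lemma and the partial derivative $d(DG_H)(x)$ used here, but since we have fixed $H$, only the $x$-derivative enters, and this is nothing but the Hessian of $G_H$.
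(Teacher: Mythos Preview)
Your proposal is correct and follows essentially the same approach as the paper: both identify $DG_H$ with $dG_H$, observe that transversality to $0$ at a preimage point $x$ amounts to invertibility of $d_x(dG_H)=d^2_xG_H\in L(E,E^*)$, and recognize this as exactly the Morse condition. Your write-up is simply more explicit about why the splitting condition is automatic and why surjectivity upgrades to bijectivity in finite dimensions.
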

\begin{proof}
Note that $d G_H (x) = DG(H,x)$ so $DG_H$ is transversal to $0 \in E^*$ if an only if $d G_H$ is transversal to $0$, if and only if for each critical point $x$ of $G_H$, $d^2_x G_H = d_x(d G_H ) \in L(E, E^*) $ is invertible. This $G_H$ is a Morse function.
\end{proof}

We conclude that there exists a residual set $\Sigma \subseteq C^2_{per}(I, \R^n)$ such that 
if $H\in \Sigma$ then $G_H$ is a Morse function.

\begin{lemma}
Let $f:U \to R$ be a Morse function and let $x_0$ be a critical point of index $\lambda$.
Then there exists a neighborhood $V$ that isolates $x_0$ as a critical point, and such that

\[deg(\nabla f, V, 0) = (-1)^\lambda.\]
\end{lemma}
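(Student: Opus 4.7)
The plan is to reduce the computation of the local degree of $\nabla f$ at the isolated zero $x_0$ to the computation of the degree of a linear map, namely the Hessian $H_f(x_0) \in L(E,E^*)$ (identified with a linear endomorphism of $\R^n$ via the standard inner product). Since $f$ is Morse, $H_f(x_0)$ is invertible, which will immediately give us both the isolation of the critical point and the value of the degree.

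First, I would write the Taylor expansion
\[
\nabla f(x) = H_f(x_0)(x-x_0) + R(x), \qquad |R(x)| = o(|x-x_0|),
\]
valid in some neighborhood of $x_0$. Because $H_f(x_0)$ is invertible, there exists $c>0$ with $|H_f(x_0)(v)| \geq c|v|$ for every $v \in \R^n$. Choose $\rho>0$ small enough that $|R(x)| \leq \tfrac c 2 |x-x_0|$ for $|x-x_0| \leq \rho$, and that there are no other critical points of $f$ in this ball (this is possible because the nondegeneracy of $H_f(x_0)$ forces $x_0$ to be isolated in the zero set of $\nabla f$). Set $V := \{x : |x - x_0| < \rho\}$.

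Next, consider the straight-line homotopy
\[
\Phi_t(x) = (1-t)\nabla f(x) + t H_f(x_0)(x-x_0) = H_f(x_0)(x-x_0) + (1-t)R(x), \qquad t \in [0,1].
\]
On $\partial V$ we have $|H_f(x_0)(x-x_0)| \geq c\rho$ while $|(1-t)R(x)| \leq \tfrac c 2 \rho$, so $\Phi_t$ does not vanish on $\partial V$. By homotopy invariance of the Brouwer degree,
\[
\deg(\nabla f, V, 0) = \deg\bigl(H_f(x_0)(\,\cdot\, - x_0),\, V,\, 0\bigr) = \operatorname{sign}\det H_f(x_0).
\]

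Finally, by Sylvester's law of inertia, $H_f(x_0)$ is congruent to the diagonal matrix with $\lambda$ entries equal to $-1$ and $n-\lambda$ entries equal to $+1$; hence $\operatorname{sign}\det H_f(x_0) = (-1)^\lambda$, yielding $\deg(\nabla f, V, 0) = (-1)^\lambda$, as claimed. No step should present any serious difficulty; the only thing that requires care is checking the admissibility of the homotopy, which is controlled by the invertibility of $H_f(x_0)$ together with the remainder estimate $R(x) = o(|x-x_0|)$.
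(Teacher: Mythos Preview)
Your argument is correct. The paper's proof takes a slightly different route: it invokes the Morse lemma to obtain a chart $(\hat V, x_1,\ldots,x_n)$ in which $f(x_1,\ldots,x_n) = -\sum_{i<\lambda} x_i^2 + \sum_{i\geq\lambda} x_i^2$, and then takes for $V$ a small box in these coordinates, where the degree computation is immediate. Your approach bypasses the Morse lemma altogether: you linearize $\nabla f$ via its first-order Taylor expansion and homotope directly to the Hessian, reading off the degree as $\operatorname{sign}\det H_f(x_0)=(-1)^\lambda$. Both arguments ultimately rest on the nondegeneracy of $H_f(x_0)$; yours is more self-contained (no normal-form theorem needed) and handles the degree computation in the original coordinates, while the paper's version has the minor advantage of producing an explicit model neighborhood. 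Either way the result is standard.
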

\begin{proof}
Using the Morse lemma we obtain a chart
$(\hat V,x_1, \ldots, x_n)$ such that $f(x_1, \ldots, x_n) = - \sum_{i < \lambda} x_i^2 + \sum_{i \geq \lambda} x_i^2$. Take a square inside $\hat V$ of the form 
\[V:=\{(x_1, \ldots, x_n): \|(x_1, \ldots, x_{\lambda})\| \leq \epsilon, \|(x_{\lambda+1}, \ldots, x_n)\| \leq \epsilon \}.\]
This is the required neighborhood.
\end{proof}

Summarizing, when $H \in \Sigma$
we obtain at least one solution of \eqref{equa} for each critical point of $G_H$.

\begin{remark}
In the present case, in which 
$G$ is a Morse function, the use of Leray-Schauder degree may be avoided by simply invoking the implicit function theorem for Banach spaces.
\end{remark}



The following theorems give us relations between 
the number of critical points of $G$ and some knot invariants of $H$ (see \cite{AD} for 
a general overview on knot theory).
Although its statement 
is contained in that of Theorem \ref{tunnelthm}, we shall give an independent 
proof because it is much simpler, it uses different tools and 
part of it shall be used later.

\begin{theorem}
\label{trivialknotthm}
Assume $H$ is a non-trivial knot embedded in $\R^3$ and that $G$ is a Morse function.
Then $G$ has at least $3$ critical points in $\R^3$.
\end{theorem}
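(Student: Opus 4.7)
The plan is to use Morse theory for the proper function $G:\R^3\setminus Im(H)\to(0,\infty)$, together with the Papakyriakopoulos unknotting theorem. The key observation is that an Euler-characteristic count on a Morse handle decomposition forces the total number of critical points of $G$ to be \emph{odd}, and that a single critical point would make $\pi_1(\R^3\setminus Im(H))\cong\Z$, which by Papakyriakopoulos would contradict the non-triviality of $H$.

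To set up a compact model, I would invoke Lemma \ref{coercive} to see that $G$ is proper from $\R^3\setminus Im(H)$ to $(0,\infty)$, and Lemmas \ref{accumulate} and \ref{faraway} to conclude that the critical set of $G$ is contained in a compact subset of $\R^3\setminus Im(H)$, and hence is finite by the Morse hypothesis. Choosing regular values $0<\epsilon_0<\epsilon_1$ that bracket all critical values, set $W:=\{\epsilon_0\leq G\leq\epsilon_1\}$. Its boundary has two components: a sphere $\{G=\epsilon_0\}$ (since $G(x)\sim|x|^{-(q-1)}$ at infinity, small level sets are close to round spheres) and a torus $\{G=\epsilon_1\}$ enclosing $Im(H)$. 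Moreover, the inclusion $W\hookrightarrow\R^3\setminus Im(H)$ is a homotopy equivalence.

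Then I would apply Morse theory on $W$, obtaining a handle decomposition consisting of a collar of the sphere end with one $\lambda$-handle attached per index-$\lambda$ critical point. Lemma \ref{subharmonic} forbids local maxima, so $m_3=0$. A direct computation (Alexander duality in $S^3$, corrected for the point at infinity, or a deformation retract to $S^1\vee S^2$ in the unknotted reference case) gives $H_\ast(\R^3\setminus Im(H))\cong\Z,\Z,\Z,0$ and hence $\chi=1$. Matching the two expressions for the Euler characteristic,
\[
2+m_0-m_1+m_2=1,
\]
so $m_1=1+m_0+m_2$ and the total number of critical points is $m_0+m_1+m_2=1+2(m_0+m_2)$, always odd. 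If this total were $1$, the unique critical point would have index $1$ and the handle decomposition would present $W$ as $S^2$ with a single $1$-cell attached, hence $W\simeq S^1\vee S^2$, forcing $\pi_1(\R^3\setminus Im(H))=\Z$; by the Papakyriakopoulos unknotting theorem $H$ would then be trivial, contradicting the hypothesis. Therefore the total number of critical points is at least $3$.

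The main obstacle I anticipate is the careful bookkeeping for Morse theory on the non-compact $\R^3\setminus Im(H)$, in particular verifying the identification $\{G\leq\epsilon_0\}\simeq S^2$ and the homotopy equivalence $W\simeq\R^3\setminus Im(H)$; once those are in place the argument reduces to the handle count and the invocation of Papakyriakopoulos.
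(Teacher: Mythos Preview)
Your argument is correct. The Morse--Euler count is equivalent to the paper's: after a change of orientation (the paper works with $-G$ on $S^3$ and adds cells for $Im(H)$ and for the point at infinity, while you work with $G$ on the cobordism $W$ between a large sphere and a torus near $H$) both computations yield an odd total and, in the extremal case, a single index-$1$ critical point of $G$. Where the two proofs diverge is in extracting the contradiction from that extremal case. The paper looks at the descending $2$-disk of that critical point (for $-G$): its boundary circle can be pushed into a tubular neighbourhood of $Im(H)$, where it represents $\pm[H]$ in homology, hence is a satellite of $H$ with winding number $\pm1$; the genus inequality for satellites then forces this circle to be a nontrivial knot, contradicting the fact that it bounds the embedded descending disk. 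You instead read off the homotopy type $W\simeq S^2\vee S^1$ from the single $1$-handle, conclude $\pi_1(S^3\setminus Im(H))\cong\Z$, and invoke the Papakyriakopoulos unknotting theorem (Dehn's lemma). Your route is shorter and leans on a very standard $3$-manifold tool; the paper's route is more explicit about the bounding disk and has the side benefit that its cell-complex bookkeeping (in particular the relation $m_2-m_1=m_3+1$) is reused verbatim in the proof of the sharper tunnel-number bound (Theorem~\ref{tunnelthm}). The technical points you flag---that $\{G=\epsilon_0\}$ is a sphere and $\{G=\epsilon_1\}$ a torus, and that $W\hookrightarrow\R^3\setminus Im(H)$ is a homotopy equivalence---are handled by Lemmas~\ref{coercive}, \ref{faraway} and \ref{accumulate} exactly as you anticipate.
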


\begin{proof}
Changing the function $G$ near $Im(H)$ we may assume that $-G$ is a Morse function in $S^3$ with a global minimum $m \in Im(H)$ and an index $1$ critical point in $Im(H)$.
Consider the Morse decomposition of $S^3$ through the function $-G$.
Call $M^c = \{x/-G(x) \leq c \}$ and $M^{-\infty} = Im(H) = S^1$.

We have a cell complex given by the Morse decomposition of $S^3$.
\[0 \to H_3(X_3, X_2) \to H_2(X_2, X_1) \to H_1(X_1, X_0) \to H_0(X_0) \to 0\]
\[0 \to \Z^{m_3 }\oplus N.\Z \to \Z^{m_2} \to \Z^{m_1} \oplus H.\Z \to \Z^{m_0} \oplus m.\Z \to 0\]
with homology $H_*(S^3) = \{\Z, 0, 0, \Z\}$.

Here $m_i$ is the number of critical points of $-G$ of index $i$ in $\R^3 \setminus Im(H)$ and $N$ is the north pole, where we have a global maximum.
Since $G$ is subharmonic we have $m_0 = 0$.

Computing the Euler characteristic we have
\[\chi(S^3) = \chi(Im(H)) + m_0 - m_1 + m_2 - (m_3 + 1)\]
\[0 = 0 - m_1 + m_2 - (m_3+1)\]
\begin{equation}
\label{m1m2m3}
m_2 - m_1 = m_3 + 1 \geq 1.
\end{equation}

We see that the only way to have just $1$ critical point in $\R^3$ is $m_2 = 1, m_1 = 0, m_3 = 0$. All other possibilities give $3$ or more. Then the cell complex reduces to
\[0 \to N.\Z \xrightarrow{\; 0 \;} e.\Z \to H.\Z \xrightarrow{\; 0 \;} m.\Z \to 0\]

We know that $d(N) = 0$ because $H^3(S^3) = \Z$, and $d(H) = m - m = 0$.

Now consider the attaching map $f:S^1 = \partial e \to X_1$ corresponding to the unique $2$-cell $e$.

Since $d: e.\Z \to H.\Z$ is an isomorphism we have $d(e) = \pm H$.

Now we may consider $f:D^2 \to S^3$ as the inclusion of the $2$-cell. Composing with the isotopy 
generated by the negative gradient, we may assume that $G(f(S^1))$ is uniformly large so $Im(f)$ is uniformly close to $Im(H)$ and thus lies in a tubular neighborhood of $Im(H)$. 
It is clear that $f$ is homotopic to $H$ or $H^{-1}$ inside that tubular neighborhood.

It follows that $f(S^1)$ is a satellite knot whose companion is $H$. 
From the genus formula for satellite knots, since $f$ is homotopically nontrivial in the tubular neighbourhood, we see that $f(S^1)$ is a nontrivial knot.
But this is a contradiction, since it is the boundary of an embedded $2$-cell in $\R^3$. 
\end{proof}

%

\begin{remark}
A lower bound for the number of solutions 
may be also obtained by considering the presentation 
of the knot group given by the Morse decomposition of the knot complement.
Namely, a Morse function in $S^3 \setminus Im(H)$ gives a presentation of $\pi_1(S^3 \setminus Im(H))$ with one generator for each critical point of index $1$ and one relation for each critical point of index $2$. 
Thus an obvious lower bound for the number of critical points would be the minimal numbers of generators (and relations) of a presentation of the group.
\end{remark}

A knot invariant that seems to be closely related to the minimal number of critical points of $G$ is the following:

\begin{definition}
Let $K \subseteq S^3$ be a knot. A tunnel is an embedded arc with endpoints in $K$.
The tunnel number of $K$, $t(K)$ is the minimal number of tunnels $t_i$ such that if $N = N(K, t_i)$ is a tubular neighbourhood of the knots and the tunnels, then $S^3 \setminus N$ is a handlebody. We call this set of tunnels a tunnel decomposition.
\end{definition}

The only tunnel number zero knot is the trivial knot, so Theorem \ref{trivialknotthm} is a particular case of our next result. 

\begin{theorem}
\label{tunnelthm}
Assume $H$ is a knot embedded in $\R^3$ and that $G$ is a Morse function.
Then $G$ has at least $2 t(H) + 1$ critical points and consequently equation \eqref{equa} has at least $2 t(H) + 1$ periodic solutions for $\lambda$ large enough.
\end{theorem}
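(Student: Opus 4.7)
The plan is to use Morse theory on $-G$ extended to $S^3$ to produce an explicit tunnel decomposition of $H$ whose number of tunnels is exactly the count $m_1$ of index-$1$ critical points of $-G$ in $\R^3\setminus Im(H)$. Combined with the Euler-characteristic identity from Theorem~\ref{trivialknotthm}, this will force the total critical count to be at least $2t(H)+1$. Concretely, as in the proof of Theorem~\ref{trivialknotthm}, I would modify $G$ in a small neighborhood of $Im(H)$ so that $-G$ extends to a Morse function on $S^3$ whose critical set in $Im(H)\cup\{N\}$ consists of an absolute minimum and an index-$1$ saddle on $Im(H)$ together with the maximum at $N$, and which agrees with $-G$ outside that neighborhood. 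With $m_i$ the number of critical points of $-G$ in $\R^3\setminus Im(H)$ of index $i$, subharmonicity of $G$ gives $m_0=0$ and \eqref{m1m2m3} gives $m_2=m_1+m_3+1$, so the total number of critical points of $G$ in $\R^3$ is $m_1+m_2+m_3=2m_1+2m_3+1\ge 2m_1+1$, and it suffices to prove $m_1\ge t(H)$.

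Next I would run the standard Morse handle decomposition of $S^3$ induced by $-G$. For $c$ slightly above the highest critical value on $Im(H)$, the sublevel set $T:=\{-G\le c\}$ is a tubular neighborhood of $H$, i.e.\ a solid torus (the $0$-handle at the minimum on $Im(H)$ with the $1$-handle at the saddle attached). Crossing each of the $m_1$ index-$1$ critical points of $-G$ in $\R^3\setminus Im(H)$ attaches one $3$-dimensional $1$-handle, and after a small isotopy sliding the attaching disks into $Im(H)$ the cores become $m_1$ disjoint embedded arcs $t_1,\dots,t_{m_1}$ with endpoints on $H$---that is, tunnels in the sense of the definition. The resulting set $N':=T\cup\bigcup_i(\text{1-handles})$ is then a regular neighborhood $N(H,t_1,\dots,t_{m_1})$, a handlebody of genus $m_1+1$ with connected boundary $\partial N'$.

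The main point to verify, and the step I expect to be technically hardest, is that $S^3\setminus\mathrm{int}(N')$ is itself a handlebody, which by definition will show $\{t_1,\dots,t_{m_1}\}$ is a tunnel decomposition and hence $t(H)\le m_1$. The idea is to turn the Morse decomposition upside down above level $c$: the critical points of $-G$ above $c$ are the $m_2$ of index $2$, the $m_3$ of index $3$, and the maximum at $N$, so viewing the cobordism $S^3\setminus\mathrm{int}(N')$ with height function $G$ starting from $N$ yields a handle decomposition with $m_3+1$ zero-handles, $m_2$ one-handles, and no handles of higher index. A compact $3$-manifold with boundary admitting such a decomposition is a disjoint union of handlebodies; since $\partial N'$ is a connected closed surface and every component of $S^3\setminus\mathrm{int}(N')$ must meet $\partial N'$, the complement is connected and so is a single handlebody. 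The technical care needed here lies in checking that the generic-position and handle-slide arguments really yield embedded disjoint proper tunnels and that $N'$ agrees up to ambient isotopy with a standard regular neighborhood of $H\cup\bigcup_i t_i$. Once this is done, $\#\mathrm{Crit}(G)\ge 2t(H)+1$ is immediate, and the statement about periodic solutions of \eqref{equa} follows from the correspondence between critical points of $G$ and solutions of \eqref{equa} developed in the preceding sections.
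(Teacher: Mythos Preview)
Your overall strategy coincides with the paper's: show that the index-$1$ critical points of $-G$ in $\R^3\setminus Im(H)$ furnish a tunnel system for $H$, hence $m_1\ge t(H)$, and combine with \eqref{m1m2m3} to get $m_1+m_2+m_3\ge 2t(H)+1$. The difference lies only in how the step ``$S^3\setminus\mathrm{int}(N')$ is a handlebody'' is justified. The paper first perturbs $G$ (via Kupka--Smale) to a Morse--Smale function, takes the \emph{unstable manifolds} $\gamma_p$ of the index-$1$ points of $-G$ as the tunnels (Morse--Smale forces each $\gamma_p$ to land on $Im(H)$, since the only index-$0$ point lies there), and then shows directly that the gradient flow of $-G$ retracts the complement onto a tubular neighborhood $V$ of $\{N\}\cup\bigcup_q\delta_q$, where the $\delta_q$ are the stable $1$-manifolds of the index-$2$ points; this $V$ is manifestly a handlebody. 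Your argument instead invokes the dual handle decomposition: viewed from $N$ with height function $G$, the complement is built from $(m_3+1)$ $0$-handles and $m_2$ $1$-handles, hence is a handlebody. Both routes are valid; the paper's avoids any appeal to self-indexing or handle rearrangement because it never identifies $N'$ with a sublevel set, while yours is cleaner once you grant the standard rearrangement lemma. One point to make explicit in your write-up: your description presumes that all index-$1$ critical values of $-G$ in $\R^3\setminus Im(H)$ lie below all index-$2$ and index-$3$ ones, which is not automatic---you should either invoke the self-indexing rearrangement (Smale) at the outset, or, as the paper does, bypass sublevel sets entirely and work with the invariant manifolds after a Morse--Smale perturbation.
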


\begin{remark}
If a knot has tunnel number one, it will be a one-relator knot and therefore prime \cite{NO}.
Then for any composite knot we will have at least $5$ critical points.
Also, it is worth noting that a decomposition with $t$ tunnels gives a presentation of $\pi_1(S^3 \setminus Im(H))$ with $t$ relations so the present theorem provides a better bound for the number of solutions than the one stated in the previous remark.
\end{remark}

\begin{potwr}{tunnelthm}
By the Kupka-Smale theorem \cite[pp 159, thm 6.6]{AB}, we may perturb $G$ (preserving the critical points and its indices), in order to obtain a Morse-Smale function.

For each critical point $p$ of $-G$ of index $1$, denote $\gamma_p:\R \to S^3$ the embedding of the unstable manifold. 

Clearly $\gamma_p$ is connected with $Im(H)$ because $G$ is Morse-Smale.
Now take a tubular neighborhood $U$ of $Im(H) \cup Im(\gamma_{p_1}) \cup \ldots \cup Im(\gamma_{p_k})$. We will show that $S^3 \setminus U$ is a handlebody then $k$, the number of $\gamma$ curves, is greater than or equal to $t(H)$.

For each critical point $q$ of $-G$ of index $2$, denote $\delta_q:\R \to S^3$ the embedding of the stable manifold.
Again, $\delta_q$ connects with $N \in S^3$ because $G$ is Morse-Smale. Obviously, $\gamma_{p_i}, \delta_{q_j}$ are  disjoint.

We shall construct a tubular neighborhood $V$ of $\{N\} \cup Im(\delta_{q_1}) \cup \ldots \cup Im(\delta_{q_s})$. $V$ is a handlebody and we will show that $S^3 \setminus U$ retracts to $V$.

Consider the positive gradient flow of $-G$ (i.e. $-\nabla G$ ) denoted by $\phi^t(x)$.
Take a point $x \in S^3 \setminus U$.
If the orbit of $x$ converges to a critical point $q$ then it belongs to the unstable manifold of $-G$ at $q$.
We know that $x$ cannot belong to any $\gamma_{p_i}$ so $q$ is a critical point of $-G$ of index $2$ or $q = N$.
We deduce that $q \in V$. 

Orbits always converge to critical points, then every point in $(S^3 \setminus U) \setminus V$ must eventually enter at $V$. 
If we manage to construct $V$ to be transversal to the flow, 
then, by the inverse function theorem, the arrival time at $V$ is a continuous function $t(x)$. This allows us to construct 
the required deformation as
\[\psi(t,x) = \phi^{\min\{t,t(x)\}}(x).\]

Finally, using formula \eqref{m1m2m3} we obtain at least $t(H)+1$ critical points of index $2$.

Now we shall construct $V$.
With this purpose, take a critical point $q$ of index $2$ and a Morse like system of coordinates $x_i$ in a neighborhood of $q$ of the form $U = [-1,1] \times D^2$ where $\{\pm 1\} \times D^2$ is the exit set.
We know that $\delta_q$ converges to the attractor $N$.

Consider
\[\alpha^\pm: \R_+ \times D^2 \to S^3\]
\[\alpha^\pm(t,x) = \phi^t(\pm 1,x)\]

Let us show that $\alpha^\pm$ is an embedding.

Clearly $t \mapsto \phi^t(x)$ is injective because $f$ is strictly decreasing in the integral curves.
Moreover, points leaving $U$ can never return since $f$ restricted to the exit set is less or equal to $f$ restricted to the entrance set.
We conclude that the orbits of the points in $\{\pm 1\} \times D^2$ are all different and then $\alpha^\pm$ is injective.
It is clear, also, that the image is open and $\alpha$ is an embedding.

Notice that $\alpha$ sends $\R_+ \times \{x\}$ to integral lines.
Now we may easily construct a neighborhood $V_q$ of $\R_+ \times \{0\}$ with boundary transversal to the horizontal flow $t,x \mapsto t+s,x$.
Call $F = V_q \cap \{0\} \times D^2$.
It follows that $\alpha^+(V_q) \cup ([-1,1] \times F) \cup \alpha^-(V_q)$ is a neighborhood of $\delta_q$.
Taking the union of these neighborhoods and an attracting neighborhood of $N$ we obtain the set $V$ with the desired properties.
\end{potwr}

\section{Links and the restricted \texorpdfstring{$n$}{n}-body problem}
\label{restrictednbody}

Consider the equation

\begin{equation}
\label{restrnbodyequa}
\left\{
\begin{array}{cc}
z''(t) = \sum_{i=1}^{n-1} \nabla g(z - \lambda p_i(t)) + \lambda h(t)\\
z(0) = z(1), \;\; z'(0) = z'(1)
\end{array}
\right.
\end{equation}
for $z \in \R^3$, where $p_i:I \to \R^3$ are arbitrary periodic functions.

This equation describes the motion of a particle $z$ under the force of the gravitational attraction of $n-1$ bodies moving along large periodic trajectories $\lambda p_i(t)$, and under an arbitrary force $\lambda h(t)$ of comparable intensity.
The letter $n$ stands for the number of bodies and not for the dimension, which is now equal to $3$.

As before, assume $\overline{h}=0$ so we are able to make the change of variables
\[z = \lambda (x - H(t))\]
where $H$ is a periodic second primitive of $-h$. 

With the new variables, the equation is transformed into:
\begin{equation}
\left\{
\begin{array}{cc}
x''(t) = \epsilon \sum_{i=1}^{n-1} \nabla g(x -(H(t) + p_i(t)))\\
x(0) = x(1), \;\; x'(0) = x'(1).
\end{array}
\right.
\end{equation}

Let $G_t:W \to \R$ be given by
\[G_t(x) = \sum g(x - (H(t) + p_i(t))),\]
then clearly
\[\nabla G_t(x) = \sum \nabla g(x - (H(t) + p_i(t)))\]
so the equation becomes
\begin{equation}
\left\{
\begin{array}{cc}
x''(t) = \epsilon \nabla G_t(x)\\
x(0) = x(1), \;\; x'(0) = x'(1).
\end{array}
\right.
\end{equation}
Thus, solutions of
(\ref{restrnbodyequa})
are related to the function $G(x) = \int_0^1 G_t(x) dt$ in the same manner as in the previous sections.

The set in which $G= \infty$ is the union of the curves $k_i = H+p_i$, so we define $K = \bigcup_{i=1}^{n-1} Im(k_i)$.

Theorem \ref{mainthmR3} may easily be generalized as follows:

\begin{theorem}
\label{mainthmR3gen}
Let $r = dim(H^1(\R^3 \setminus K))$. Then there exists an open set $\Omega \subseteq \R^3 \setminus K$ such that $deg(\nabla G, \Omega, 0) \geq r-n+2$.
\end{theorem}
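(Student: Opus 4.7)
The plan is to imitate the proof of Theorem \ref{mainthmR3} almost verbatim, with the single curve $Im(H)$ replaced by the closed set $K=\bigcup_{i=1}^{n-1}Im(k_i)$, which now has at most $n-1$ connected components. Since the new $G(x)=\sum_{i=1}^{n-1}\int_0^1 g(x-k_i(t))\,dt$ is a finite sum of terms of the original type, the preparatory Lemmas \ref{smooth}--\ref{faraway} all apply termwise: $G$ is smooth on $\R^3\setminus K$, blows up on $K$, decays at infinity, is subharmonic (or harmonic) for $q\geq 2$, and $-\nabla G$ is homotopic to the identity on a large sphere. Moreover $G$ extends continuously by $0$ at the north pole $N$ of $S^3\supset\R^3$.

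Next I would construct the open set $U$ exactly as in the earlier proof: pick smooth curves $\gamma_1,\ldots,\gamma_r$ in $\R^3\setminus K$ representing a basis of $H_1(\R^3\setminus K)$, join each to $N$ by a path $\delta_i$ in $\R^3\setminus K$, choose via Sard's lemma a regular value $\alpha$ of $G$ exceeding $G$ on $\bigcup_i\gamma_i\cup\delta_i$, let $V$ be the connected component of $\{G<\alpha\}$ that contains $N$, and set $U=\overline V^{c}$. Then $\partial U$ is a smooth closed hypersurface, $K\subseteq\{G>\alpha\}\subseteq U$, and $-\nabla G$ is the outward normal along $\partial U$.

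The only genuine change from the original argument lies in counting the connected components of $U$. Each connected component of $K$ is contained in $U$ and hence sits inside a single component of $U$, and the subharmonic maximum principle forces every component of $U$ to meet $K$: otherwise $G$ would be subharmonic on a component $\overline{U'}$ disjoint from $K$ and attain its maximum at some $x_0\in\partial U$ with $G(x_0)=\alpha$, which combined with $G\leq\alpha$ on $\overline V$ would make $x_0$ a local maximum of $G$ in $\R^3\setminus K$, contradicting the regularity of $\alpha$. Hence $b_0(U)\leq n-1$, i.e.\ the reduced Betti number satisfies $k_0\leq n-2$. Since $V$ is connected by construction, Alexander duality (Lemma \ref{duality}) gives $k_2=e_0=0$, while the inclusion-induced surjection $H_1(V)\twoheadrightarrow H_1(\R^3\setminus K)$ gives $k_1=e_1\geq r$, so
\[
\chi(U)\;=\;1+k_0-k_1+k_2\;\leq\;1+(n-2)-r\;=\;n-1-r.
\]

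Finally, Lemma \ref{normaldegree} yields $\deg(-\nabla G,\partial U,0)=\chi(U)$, Lemma \ref{faraway} yields $\deg(-\nabla G,\partial B,0)=1$ on a large ball $B\supset\overline U$, and the additivity of the Leray--Schauder degree over $\Omega=B\setminus\overline U$ gives
\[
\deg(-\nabla G,\Omega,0)\;=\;1-\chi(U)\;\geq\;r-n+2,
\]
as required. The main (mild) obstacle is the component-counting step: one must verify that every component of $U$ meets $K$ and that no two components of $U$ can share a component of $K$, which together yield the sharp bound $k_0\leq n-2$. Everything else reduces termwise to the argument already made for a single curve.
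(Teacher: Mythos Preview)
Your proposal is correct and follows essentially the same approach as the paper's own proof. The paper's argument is in fact just a two-sentence sketch---``repeat the proof of Theorem \ref{mainthmR3}, noting that now every connected component of $U$ touches $K$ and hence $U$ has at most $n-1$ components, so $\chi(U)\leq n-1-r$''---and you have accurately fleshed out the details, including the maximum-principle argument forcing each component of $U$ to meet $K$ and the Alexander-duality bookkeeping.
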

\begin{proof}
The proof follows exactly as in the proof of Theorem \ref{mainthmR3} except that now the set $U$ is not connected.
From the discussion in the mentioned proof, 
it follows that every connected component 
of $U$ touches $K$ and thus $U$ has at most $n-1$ connected components.
We deduce that $\chi(U) \leq n - 1 - r$ and so completes the proof.
\end{proof}
\begin{remark}
Notice that the trajectories of the curves $k_i$ may have self-intersections and intersect each other.
In the statement of the preceding theorem, the number 
$r-n+2$ may be replaced by $r-c+1$, where $c$ is the number of connected components of $K$.
\end{remark}

Theorem \ref{tunnelthm} may be generalized as follows:

\begin{theorem}
\label{tunnelthmgen}
Assume $K$ is a link embedded in $\R^3$ and that $G$ is a Morse function.
Then $G$ has at least $2 t(K) + 1$ critical points.
\end{theorem}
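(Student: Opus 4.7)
The plan is to mirror the proof of Theorem \ref{tunnelthm} essentially verbatim, the key observation being that every step survives replacing the knot $Im(H)$ by an arbitrary link $K$; only a mild connectedness issue in the handlebody step is new.

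First I would modify $G$ near each component of $K$ so that $-G$ extends to a smooth Morse function on $S^3$ having, on each component of $K$, exactly one global minimum and one extra index-$1$ critical point, and having the point at infinity $N$ as its unique index-$3$ critical point. Applying Kupka-Smale, without disturbing the critical points or their indices, one may further arrange that the flow of $-\nabla G$ is Morse-Smale. Let $m_i$ count the index-$i$ critical points of $-G$ lying in $\R^3 \setminus K$; subharmonicity of $G$ forces $m_0=0$.

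I would then repeat the two geometric constructions of the knot proof. For each index-$1$ critical point $p_i$ of $-G$ in $\R^3 \setminus K$, the associated $1$-cell $\gamma_{p_i}$ of the Morse decomposition is an embedded arc both of whose endpoints lie on $K$ (by Morse-Smaleness, the only lower-index critical points being the minima on $K$). Let $U$ be a tubular neighborhood of $K \cup \bigcup_i \gamma_{p_i}$. For each index-$2$ critical point $q_j$ of $-G$, let $\delta_{q_j}$ be the associated $1$-dimensional arc from $q_j$ to $N$. As in the knot case, the flow $-\nabla G$ exhibits $S^3 \setminus U$ as a deformation retract onto a tubular neighborhood $V$ of $\{N\}\cup\bigcup_j\delta_{q_j}$. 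Since all $\delta_{q_j}$ meet at the common vertex $N$, this graph is connected, so $V$ (and hence $S^3 \setminus U$) is a handlebody; consequently $\{\gamma_{p_i}\}$ is a tunnel system for $K$, giving $m_1 \ge t(K)$.

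Finally, the Morse-theoretic Euler-characteristic identity
\[0=\chi(S^3)=\chi(K)+m_0-m_1+m_2-(m_3+1)\]
still applies, and $\chi(K)=0$ since each component of $K$ is a circle; hence $m_2 - m_1 = m_3+1 \ge 1$. Therefore the total number of critical points of $G$ in $\R^3 \setminus K$ is at least
\[m_1+m_2+m_3 \ge m_1+(m_1+1)+0 = 2m_1+1 \ge 2\,t(K)+1,\]
as required. The one point that needs some care, beyond what is done in the knot case, is that $V$ must remain a single handlebody when $K$ has several components; this is automatic because all arcs $\delta_{q_j}$ terminate at $N$, keeping the underlying graph connected regardless of how many connected pieces $K$ has.
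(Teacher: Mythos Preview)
Your argument is correct and essentially identical to the paper's: the authors simply declare that the proof of Theorem~\ref{tunnelthm} carries over unchanged, noting only that each component of $K$ now contributes one $0$-cell and one $1$-cell, so the Morse chain complex becomes $0 \to \Z^{m_3}\oplus N\cdot\Z \to \Z^{m_2} \to \Z^{m_1}\oplus\Z^{n-1} \to \Z^{m_0}\oplus\Z^{n-1} \to 0$ and hence formula~\eqref{m1m2m3} still holds---precisely your $\chi(K)=0$ observation. One small slip: you assert that $N$ is the \emph{unique} index-$3$ critical point and that every $\delta_{q_j}$ terminates at $N$, yet later you (correctly) allow $m_3\ge 0$; the cleaner reason $V$ is a connected handlebody is simply that $S^3\setminus U$, being the complement of a $1$-complex in $S^3$, is already connected, and $V$ is its deformation retract.
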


\begin{remark}
It is easy to see that $G$ is a Morse function for `generic' $K$.
\end{remark}

The proof requires no modification at all with respect to that of Theorem \ref{trivialknotthm}, but some explanation about formula \eqref{m1m2m3} 
is needed. 
Indeed, 
now we have to represent each strand of the link $K$ by one $0$-cell and one $1$-cell attached. The cell complex has to be replaced by
\[0 \to \Z^{m_3 }\oplus N.\Z \to \Z^{m_2} \to \Z^{m_1} \oplus \Z^{n-1} \to \Z^{m_0} \oplus \Z^{n-1} \to 0\]
so formula \eqref{m1m2m3} still holds.

We are already in conditions to establish the main results of this section

\begin{theorem}
Define $r$ as in Theorem \ref{mainthmR3gen}. If $r \geq n-1$, then for $\lambda$ large enough there exist at leasts 
one periodic solution of \eqref{restrnbodyequa}, and generically 
$r-n+2$ distinct solutions.
\end{theorem}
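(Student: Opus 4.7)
The plan is to combine three ingredients already at hand — the degree estimate of Theorem \ref{mainthmR3gen}, the Leray--Schauder continuation of section \ref{repulsive}, and the genericity result of Theorem \ref{mainthmgen} — applied essentially verbatim to the multi-body potential $G(x) = \sum_i \int_0^1 g(x - k_i(t))\,dt$.

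For the existence statement, under the hypothesis $r \geq n-1$ Theorem \ref{mainthmR3gen} produces an open set $\Omega \subseteq \R^3 \setminus K$ with
\[
\deg(\nabla G, \Omega, 0) \;\geq\; r - n + 2 \;\geq\; 1.
\]
Following the proof of Theorem \ref{mainthmrep}, I would form the functional $\mathcal G_\epsilon$ associated to the rescaled equation, whose restriction to the constants is $\nabla G$. The a priori bounds of Lemma \ref{boundsolutions} carry over with no essential change, because each summand $\nabla g(x - k_i(t))$ is repulsive and contributes to the energy exactly as a single body would; the argument bounding $u$ from infinity uses only $\overline{\nabla G(u)} = 0$, which follows from periodicity of the $p_i$ and the mean-zero hypothesis on $h$. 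Leray--Schauder continuation in $\epsilon$ then yields $\deg(\mathcal G_\epsilon, \mathcal O, 0) \neq 0$ on $\mathcal O := \{z \in C^2_{per} : Im(z) \subseteq \Omega\}$ for all sufficiently small $\epsilon > 0$, producing a periodic solution of \eqref{restrnbodyequa} for $\lambda$ large.

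For the generic count of $r - n + 2$ distinct solutions, I would invoke Theorem \ref{mainthmgen}. The Fredholm apparatus of section \ref{genericity} — smoothness of $N$, the manifold structure on $M = \{QN = 0\}$, and the index-zero property of $\hat F$ — transfers with no essential modification: replacing the single superposition $u(t)/|u(t)|^{q+1}$ by the finite sum $\sum_{i=1}^{n-1} (u(t) - p_i(t))/|u(t) - p_i(t)|^{q+1}$ produces an operator of identical analytic type, and regular values of $Q \circ N$ remain residual. Hence there is a residual subset $\Sigma^0 \subseteq \tilde C^0_{per}$ of forcings for which every zero of $\mathcal F$ is non-degenerate, and therefore carries Leray--Schauder index $\pm 1$. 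Since the degree is the signed count of local indices and the degree on $\mathcal O$ is at least $r - n + 2 > 0$, the geometric number of solutions in $\mathcal O$ is bounded below by $r - n + 2$.

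The main subtlety I anticipate is coordinating the two genericity statements: Theorem \ref{mainthmgen} requires a generic forcing $\lambda h$, while the construction in Theorem \ref{mainthmR3gen} implicitly depends on the geometry of the singular link $K$, which is in turn determined by $H$ (hence by $h$) together with the fixed trajectories $p_i$. I would argue that both conditions can be arranged simultaneously on a common residual subset of $\tilde C^0_{per}$: the set of $h$'s for which $G$ is Morse (compare the remark following Theorem \ref{tunnelthmgen}) intersects $\Sigma^0$ in a residual set, and inside this intersection both the degree-theoretic lower bound and the non-degeneracy of all solutions hold at once, which is exactly what is needed to count $r - n + 2$ geometrically distinct periodic orbits.
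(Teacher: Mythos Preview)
Your overall architecture is right and matches the paper's implicit argument: Theorem \ref{mainthmR3gen} supplies an $\Omega$ with $\deg(\nabla G,\Omega,0)\ge r-n+2$, the averaging/continuation scheme of the Preliminaries carries this degree to $\mathcal G_\epsilon$ for small $\epsilon$, and the genericity machinery of Section \ref{genericity} converts the degree into a lower bound on the geometric number of solutions.

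There is one genuine slip. The restricted $n$-body equation \eqref{restrnbodyequa} is \emph{attractive} (the force $\nabla g(z-\lambda p_i)$ points from $z$ toward $\lambda p_i$), so your claim that ``each summand is repulsive'' is wrong, and Lemma \ref{boundsolutions} does not apply --- the paper explicitly remarks that those bounds fail in the attractive case. Fortunately you do not need them: the existence step only requires the continuation of $\deg(\mathcal G_0,\mathcal O,0)=\deg(\nabla G,\Omega,0)$ to small $\epsilon$, exactly as in the Preliminaries, and this uses no global a priori estimate. You should drop the appeal to Lemma \ref{boundsolutions} entirely and instead follow the $\mathcal G_0\rightsquigarrow\mathcal G_\epsilon$ argument directly.

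Your final paragraph also conflates two unrelated genericity statements. The Morse property of $G$ (Section \ref{morseandknots}) is about critical points of the finite-dimensional averaged potential and is irrelevant here; what you actually need is only the analogue of Theorem \ref{mainthmgen}, i.e.\ that for a residual set of forcings every solution of the full equation is non-degenerate, so that each contributes $\pm 1$ to the degree. That alone, together with $|\deg(\mathcal G_\epsilon,\mathcal O,0)|\ge r-n+2$, yields the $r-n+2$ distinct solutions; no coordination with ``$G$ Morse'' is required.
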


\begin{theorem}
Assume that $K$ is a link embedded in $\R^3$ and that $G$ is a Morse function.
Then equation \eqref{restrnbodyequa} has at least $2 t(K) + 1$ distinct solutions for $\lambda$ large enough.
In particular, it must have at least $2n - 3$ distinct solutions even if $K$ is the unlink.
\end{theorem}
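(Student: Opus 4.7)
The proof I propose is essentially a one-line assembly of results already established in sections \ref{higherdimensions}--\ref{restrictednbody}, together with a knot-theoretic computation for the unlink. My plan is as follows.

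First, I would invoke Theorem \ref{tunnelthmgen} directly: under the stated hypotheses ($K$ an embedded link, $G$ a Morse function), it yields at least $2t(K)+1$ critical points of $G$ on $\R^3\setminus K$. Second, I would convert each critical point of $G$ into a distinct periodic solution of \eqref{restrnbodyequa} by the continuation procedure developed in the introduction and reused in section \ref{morseandknots}: since $G$ is Morse, at each critical point $x_0$ the Hessian is invertible, so the Morse lemma (or equivalently the Remark following Theorem \ref{tunnelthm} about the implicit function theorem) gives a neighborhood $V$ of $x_0$ where $deg(\nabla G, V, 0)=(-1)^{\lambda(x_0)}\ne 0$. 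The same Leray--Schauder argument used in section \ref{higherdimensions} (applied to the averaged function $G$ in place of the original) shows that for each such $V$, taking $\mathcal V=\{z\in C^2_{per}:Im(z)\subset V\}$, the degree $deg(\mathcal G_\epsilon,\mathcal V,0)$ is $\pm 1$ for all sufficiently small $\epsilon$, and hence there is a solution of \eqref{restrnbodyequa} whose image lies in $V$ for $\lambda$ large enough. Since the neighborhoods $V$ are pairwise disjoint, the resulting solutions are distinct.

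For the "in particular" clause, I need the standard fact that the tunnel number of the unlink with $k$ components equals $k-1$. Indeed, $n-2$ tunnels obviously suffice (pick $n-2$ disjoint arcs joining the components to form a connected 1-complex, whose regular neighborhood has complement a genus-$(n-2)$ handlebody). The lower bound $t\geq n-2$ follows because the complement of a regular neighborhood of a link $K$ together with $t$ tunnels is a handlebody only if the number of components of the 1-complex $K\cup(\bigcup t_i)$ has decreased to one, which requires at least $(n-1)-1=n-2$ tunnels. In our setting the link $K=\bigcup k_i$ has exactly $n-1$ components, so $t(K)\geq n-2$ always, and the previous paragraph gives $2(n-2)+1=2n-3$ distinct solutions.

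The only step with real content is the first paragraph's transfer from critical points of $G$ to solutions of \eqref{restrnbodyequa}; but this is exactly parallel to the argument already carried out for \eqref{equa} (the functional $\mathcal G_\epsilon$ and the identification $deg(\mathcal G_0,\mathcal D,0)=deg(F,D,0)$ go through unchanged, with $F=\nabla G$ in place of the single-body integrand), so no new obstacle appears. The one point where I would be careful is making sure that the uniform-in-$\epsilon$ bounds allowing the continuation remain valid for the $n$-body integrand; this is immediate since $\nabla G$ is a finite sum of terms each of which has been handled.
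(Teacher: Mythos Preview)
Your proposal is correct and matches the paper's intended approach: the paper states this theorem without proof, as an immediate consequence of Theorem~\ref{tunnelthmgen} together with the continuation mechanism from the introduction and section~\ref{morseandknots}, and you have correctly assembled those pieces. Your justification of the ``in particular'' clause via the inequality $t(K)\ge n-2$ for any $(n-1)$-component link (since a handlebody has connected boundary, forcing $K\cup\bigcup t_i$ to be connected) is exactly the argument needed and is the one the paper leaves implicit.
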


\newpage

\end{document}